\theoremstyle{plain} \numberwithin{equation}{section}
\newcommand{\0}{\mathcal{O}}
\newcommand{\D}{\mathrm{diag}}
\newcommand{\tG}{\widetilde{\mathrm{GL}}}
\newcommand{\G}{\mathrm{GL}}
\title{Transfer factors for Jacquet-Mao's metaplectic fundamental lemma}
\author{Viet Cuong Do}
\address{Faculty of Mathematics-Mechanics-Informatics\\
VNU University of Science\\334 Nguyen Trai, Thanh Xuan, Ha Noi, Vietnam. }
\email{vcuong.do@hus.edu.vn}
\date{}
\begin{document}
\frontmatter
\begin{abstract}
	In an earlier paper we proved Jacquet-Mao's metaplectic fundamental lemma which is the identity between two orbital integrals (one is defined on the space  of symmetric matrices and another one is defined on the $2$-fold cover of the general linear group) corrected by a transfer factor. In this paper, we restricted our calculation to the case where the relevant representative is a diagonal matrix. The purpose of the present paper is to show that we can extend this result for the more general relevant representative. Our proof is based on the concept of Shalika germs for certain Kloosterman integrals. 
\end{abstract}
\begin{altabstract}
	Dans un article pr\'ec\'edent, nous avons prouv\'e le lemme fondametal m\'etaplectique de Jacquet-Mao qui est l'indentit\'e entre deux int\'egrales orbitales (l'une est d\'efinie sur l'espace des matrices sym\'etriques et l'autre est d\'efine sur le rev\^etement \`a deux feuillets du groupe g\'en\'eral lin\'eaire) corrig\'ee par un facteur de transfert. Dans cet article, nous avons limit\'e notre calcul au cas o\`u le repr\'esentant pertinent est une matrice diagonale. Le but du pr\'esent article  est de montrer que nous pouvons \'etendre ce r\'esultat au repr\'esentant pertinent plus g\'en\'eral. Notre preuve est bas\'ee sur le concept des germes de Shalika pour certains int\'egraux Kloosterman.
\end{altabstract}
\thanks{This research is supported by the Vietnam National University, Hanoi (VNU) under project number QG.19.06. The first version of this paper is written during my visit at Max Planck Institute for Mathematics (MPIM). This manuscript is written during my visit at Vietnam Institute for Advanced Study in Mathematics (VIASM). The author would like to thank the MPIM and the VIASM for a very pleasant and productive visit.}
\subjclass{11F70, 11L05.}
\keywords{Shalika germs, transfer factor, fundamental lemma, metaplectic correspondence.}
\altkeywords{Germe de Shalika, facteur de transfert, lemme fondamental, correspondance métaplectique}
\maketitle
\mainmatter
\section{Introduction} Let $K$ be a global field (that is: a number field or the function field of a curve over a finite field) and $\mathbb{A}$ be its ring of ad\`eles. Jacquet conjectured that the cuspidal representation of $\G_r(\mathbb{A})$ distinguished by a general orthogonal subgroup should be the lifting of a cuspidal representation of its metaplectic cover $\tG_r(\mathbb{A})$ (a certain twofold cover of $\G_r$). Jacquet and Mao have suggested that to solve this conjecture, we establish two relative trace formulas (one for the group $\G_r$ and one for its metaplectic cover $\tG_r$) and then compare them. Roughly speaking, the relative trace formula attached to a group is an identity between two expansion of the certain integral, know as ``\textit{geometric expansion}'' and ``\textit{spectral expansion}''. The terms of geometric expansion are quite explicit  but complicated. The terms of spectral expansion contain information of automorphic representations. By comparing the geometric side of two relative trace formulas, we obtain then a comparison between the two spectral sides.   

One of the step of this approach is precisely the fundamental lemma which we state now.


Let $F$ be a non-Archimedean local field, $\0$ be its valuation ring, and $k$ be its residue field. Assume that the cardinality $q$ of $k$ is odd. 
We choose once for all an uniformizer $\varpi$ of $\0$ (i.e a generator of the maximal ideal of $\0$). We write $v$ for the valuation of $F$ and $|.|$ for the norm, normalised such that $|x|=q^{-v(x)}$.

Let $B_r$ be the standard Borel subgroup of $\G_r$ (the subgroup of invertible upper triangular matrices) with unipotent radical $N_r$, and let $T_r$ be the maximal split torus contained in $B_r$. Let $S_r$ be the variety $\{g\in\G_r|{}^tg=g\}$ and $W_r$ be the Weyl group of $T_r$. 

Let $\psi:F\to \mathbb{C}^*$ be a non-trivial additive character of level 0. We define  then a character $\theta:N_r(F)\to \mathbb{C}^*$ of $N_r(F)$ : $\theta(n)=\psi\left(\frac{1}{2}\sum_{i=2}^rn_{i-1,i}\right)$.

The local metaplectic cover $\tG_r(F)$ of $\G_r(F)$ is an extension of $\G(F)$ by $\{\pm 1\}$ (cf. \cite{KP}). We can write the elements of $\tG_r(F)$ in the form $\widetilde{g}=(g,z)$, with $g\in \G_r(F)$ and $z\in \{\pm 1\}$ and the group multiplication is defined by
$$(g,z)(g,z')=(gg',\chi(g,g')zz'),$$
where $\chi$ is a certain cocycle (cf. loc. cit. for the definition of $\chi$). This cover splits (canonically) over $N_r(F)$ (the splitting $\sigma$ over $N_r(F)$ is simply defined by $\sigma(n)=(n,1)$); it splits also over $\G_r(\0)$. The splitting $\kappa^*$ over $\G_r(\0)$ is defined by $\kappa^*(g)=(g,\kappa(g))$. We denote by $\G_r^*(\0)$ the image of $\G_r(\0)$ via the splitting $\kappa^*$.

We say a function $f$ on $\tG_r(F)$ is \textit{genuine} if it satisfies $f(g,z)=f(g,1).z$.

Let $\mathcal{H}_r$ be the set of the smooth functions with compact support on $\G_r(F)$ who are bi-$\G_r(\0)$-invariant. This $F$-vector space is equipped an algebraic structure by the convolution
$$\phi*\phi'(x)=\int_{\G_r(F)}\phi(g)\phi'(g^{-1}x)dg.$$
Its unit element is the function defined by
$$\phi_0(g)=\begin{cases}1 &\text{if }g\in \G_r(\0)\\
0&\text{otherwise.}\end{cases}$$

Let $\widetilde{\mathcal{H}}_r$ be the set of the smooth functions with compact support on $\tG_r(F)$ who are bi-$\G_r^*(\0)$-invariant and who are genuine. This $F$-vector space is equipped an algebraic structure by the convolution
$$f*f'(\widetilde{x})=\int_{\G_r(F)}f((g,1))f'((g,1)^{-1}\widetilde{x})dg.$$
Note that the function $\widetilde{g}\mapsto f(\widetilde{g})f'(\widetilde{g}^{-1}\widetilde{x})$ on $\tG_r(F)$ is $\{\pm 1\}$-invariant and that the integration is over $\G_r(F)\simeq \{\pm 1\}\setminus\tG_r(F)$. Its unit element is the function defined by
$$f_0((g,1))=\begin{cases}\kappa(g) &\text{if }g\in \G_r(\0)\\
0&\text{otherwise.}\end{cases}$$

The group $N_r$ acts on $S_r$ by $n.s={}^tnsn$ and $N_r\times N_r$ acts on $\G_r$ by $(n,n').g=n^{-1}gn$. We say an orbit $N_rs$ (resp. $(N_r\times N_r)g$) is \textit{relevant} if the restriction of $\theta^2$ (resp. $(n,n')\mapsto \theta(n^{-1}n'$)) on the stabilizer $(N_r)_{s}$ (resp. $(N_r\times N_r)_{g}$) of $s$ (resp. of $g$) is trivial. 

Consider the standard Levi-subgroup $M$ of $\G_r$ of type $(r_1,\dots,r_m)$. Thus $M$ is the group of matrices of the form $\D(g_i)$ with $g_i\in \G_{r_i}$. We denote by $w_{\G_r}$ the longest Weyl element of $\G_r$ (i.e the $r\times r$ permutation matrix whose entries are one on the second diagonal and whose other entries are 0). Let $w_M=\D(w_{\G_{r_i}})$. Let $T_M$ be the group of matrices of the form $\D(a_i\mathrm{Id}_{r_i})$ where $\mathrm{Id}_{r_i}$ is the identity matrix of size $r_i$ and $a_i\in F^*$ - the center of $M$. It follows from \cite[Theorem 1]{M} that the elements of the form $w_M\mathbf{t}$ (resp. of the form $w_{\G_r}w_M\mathbf{t}$) with $\mathbf{t}\in T_M$ (when $M$ runs through the set of standard Levi-subgroup of $\G_r$) form a system of representatives for the relevant orbits of $N_r$ (resp. of $N_r\times N_r$). So we have then a bijection between the sets of relevant orbits : $w_M\mathbf{t}\mapsto w_{\G_r}w_M\mathbf{t}$. We denote by $W_r^R$ the set of relevant elements in $W_r$. If $w\in W_r^R$ then the unique $M$ such that $w=w_M$ is denoted by $M_{w}$. We also write $T_w$ for $T_M$. For instance, if $w=\mathrm{Id}_r$ then $M_{\mathrm{Id}_r}=T_{\mathrm{Id}_r}=T_r$ and if $w=w_{\G_r}$ then $M_{w_{\G_r}} =\G_r$ and $T_{w_{\G_r}}(F)=T_{\G_r}=\{\beta\mathrm{Id}_r|\beta\in F^*\}\simeq F^*$. The stabilizer of $\mathbf{t}\in T_r(F)$ (resp. $w_{\G_r}\mathbf{t}$) in $N_r$ (resp. $N_r\times N_r$) is trivial. In this sense the diagonal matrices are representatives of the largest orbits. From now on, we shall drop the subscript $M$ in the notation $w_M\mathbf{t}$ when we don't want to specify what is the standard Levi subgroup. 

We denote by $\mathcal{C}_c^\infty(S_r(F))$ (resp. $\mathcal{C}_c^\infty(\tG_r(F))$) the space of the smooth function of compact support on $S_r(F)$ (resp. on $\tG_r(F)$). Let $\phi$ be a function in $\mathcal{C}_c^\infty(S_r(F))$ and $f$ be a genuine function in $\mathcal{C}_c^\infty(\tG_r(F))$. For each $w\textbf{t}$ as above, we consider the orbital integrals of the form :

$$I(w{\bf t},\phi)=\int_{N_r/(N_r)_{w{\bf t}}}\phi({}^tnw{\bf t}n)\theta^2(n)dn$$
and
\begin{align*}
J(w{\bf t},f)=\int_{N_r\times N_r/(N_r\times N_r)_{w_{\G_r}w{\bf t}}}f(\sigma(n)^{-1}(w_{\G_r}w{\bf t},1)\sigma(n'))\theta(n^{-1}n')dndn'.
\end{align*} 

If $\phi\in \mathcal{H}_r$, then the function $\phi_{|S_r(F)}\in \mathcal{C}_c^\infty(S_r(F))$. By abusing the notation $\mathcal{H}_r$ for the algebra $\{\phi_{|S_r(F)}|\phi\in\mathcal{H}_r\}$, our fundamental lemma is the following conjecture.
\begin{conj}[Jacquet-Mao] There exists a homomorphism  $h: \widetilde{\mathcal{H}}_r\to \mathcal{H}_r$ such that
	$J(w{\bf t},f)=\Delta(w{\bf t})I(w{\bf t},h(f))$, where $f\in \widetilde{\mathcal{H}}_r$ and $\Delta(w{\bf t})$ is an explicit transfer factor. 
\end{conj}

Since $h$ is an homomorphism between two algebras, it should send the unit element of the one to the unit element of the other. The (suggested) transfer factor for the largest orbit ($w=\mathrm{Id}_r$) is then calculated by the following propositions. 

\begin{prop}[cf. \cite{VC},\cite{VC1}]\label{result} Let $F$ be a local field of positive characteristic. Let $\textbf{t}=\D(t_1,\dots,t_r)$, we denote by $a_i=\prod_{j=1}^it_j$. We have then 
	$$J(\textbf{t},f_0)=
	\Delta(\textbf{t})I(\textbf{t},\phi_0),$$
	where (agreeing that $a_0=1$)
	$$\Delta(\textbf{t})={\zeta(-1)}^{\sum_{j \not \equiv r({\rm mod} 2)}v(a_j)}|\prod_{i=1}^{r-1}a_i|^{-1/2}\prod_{j\not \equiv r ({\rm mod} 2)}\gamma(a_ja_{j-1}^{-1},\psi).$$ 
\end{prop}
Here $\zeta :k^*\to\{\pm1\}$ is the non-trivial quadratic character and $\gamma(\bullet,\psi)$ is the Weil constant defined by: given a compact open neighbourhood $\Omega$ of 0 in $F$, for $|a|$ large enough, we have:
$$\int_{\Omega}\psi(\frac{ax^2}{2})dx=|a|^{-1/2}\gamma(a,\psi).$$
\begin{prop}[cf. \cite{VC2}]\label{result3}
	The proposition \ref{result} is still true when $F$ is a local field of characteristic zero, with sufficiently large residue characteristic.
\end{prop}
Similar identities are expected to be true for other relevant orbits.

From now on, we focus only on the case where $F$ is a local non-archimedean field of characteristic zero and the residual characteristic of $F$ is larger than $2r+1$ (the condition for the residual characteristic is needed in our calculation, cf. propositions \ref{prop1} and \ref{charac2}). For $\mathbf{t}=\D(a_ja_{j-1}^{-1},1\leq j\leq r)\in T_r(F)$, we introduce two new factors:
$$\Delta_r(\textbf{t})={\zeta(-1)}^{\sum_{j \not\equiv r({\rm mod} 2)}v(a_j)}|\prod_{i=1}^{r-1}a_i|^{-1/2}\prod_{j\not\equiv r ({\rm mod} 2)}\left[\gamma(a_ja_{j-1}^{-1},\psi)(a_ja_{j-1}^{-1},\varpi)^{v(a_ja_{j-1}^{-1})}\right]$$  
and
$$\Delta'_r(\textbf{t})={\zeta(-1)}^{\sum_{j \equiv r({\rm mod} 2)}v(a_j)}|\prod_{i=1}^{r-1}a_i|^{-1/2}\prod_{j\equiv r ({\rm mod} 2)}\left[\gamma(a_ja_{j-1}^{-1},\psi)(a_ja_{j-1}^{-1},\varpi)^{v(a_ja_{j-1}^{-1})}\right].$$ 
Here, $(,):F^*\times F^*\to \{\pm 1\}$ is the Hilbert symbol. It is a bilinear form on $F^*$ that defines a nondegenerate bilinear form on $F^*/(F^*)^2$ and satisfies $$(x,-x)=(x,y)(y,x)=1.$$

 Our main result is:
 
\begin{theo}[The main theorem]\label{thm1}
	Let $\phi$ be a smooth function of compact support over $S_r(F)$ and $f$ be a smooth genuine function of compact support over $\tG_r(F)$. If $\phi$ and $f$ satisfy 
	$J(\textbf{t},f)=
		\Delta_r(\textbf{t})I(\textbf{t},\phi)$
		(resp. $J(\textbf{t},f)=
		\Delta'_r(\textbf{t})I(\textbf{t},\phi)
	$)
	for all $\mathbf{t}\in T_r(F)$, then for all $w\in W_r^R$ and all $\mathbf{t}\in T_w(F)$ there exists $\Delta_{w}(\mathbf{t})$ (resp. $\Delta'_{w}(\mathbf{t})$) (we refer to Theorem \ref{main2} for the formula of  $\Delta_{w}(\mathbf{t})$ and of $\Delta'_{w}(\mathbf{t})$) such that  
	$J(w{\bf t},f)=
	\Delta_w(\textbf{t})I(w\textbf{t},\phi)$ (resp. $J(w{\bf t},f)=\Delta'_w(\textbf{t})I(w\textbf{t},\phi)$).	 
\end{theo}
It should mention that to calculate $\Delta_w$ (and $\Delta'_w$), we must use both $\Delta_i$ and $\Delta'_i$. 

Let $g\in \G_r(F)$. We denote by $a_i(g)$ the determinant of the sub-matrix made of the first $i$ lines and of the first $i$ columns of the matrix $g$. We denote by $\phi_1$ the function  $$\phi_1:\G_r(F)\to\{\pm 1\};\quad g\mapsto \prod_{j\not \equiv r\mathrm{mod}\,2}(a_j(g)a_{j-1}(g)^{-1},\varpi)^{v(a_j(g)a_{j-1}(g)^{-1})}.$$ 
Note that $a_{i}({}^tngn)=a_i(g)$ for all $1\leq i\leq r$. Using Propositions \ref{result} and \ref{result3} we have 
$$J(\mathbf{t},f)=\Delta_r(\mathbf{t})I(\mathbf{t},\phi_0.\phi_1)$$for all $\mathbf{t}=\D(a_ja_{j-1}^{-1},1\leq j\leq r)$.

Applying the main theorem for $\phi=\phi_0.\phi_1$ and $f=f_0$, then the (suggested) transfer factors of any kind of relevant orbits are calculated. 
\begin{coro} Let $F$ be a local field of characteristic zero with sufficiently large residue characteristic. Let $w\mathbf{t}$ be a relevant orbit of $\G_r$. We have then
	$$J(w\mathbf{t},f_0)=\Delta_w(\mathbf{t})\phi_1(w\mathbf{t})I(w\mathbf{t},\phi_0).$$
\end{coro} 


The integral $J$ above is in fact the Kloosterman integral which is considered in \cite{J3,J2} (see. section \ref{sec1}). So we have the following density theorem for Kloosterman integral: 
\begin{prop}[cf. \cite{J2}]\label{thm2} If the diagonal orbital integral $J(\mathbf{t},f)$ of a function $f\in \mathcal{C}_c^\infty(\tG_r(F))$ vanishes for all $\mathbf{t}\in T_r(F)$, then all the orbital integrals $J(w\mathbf{t},f)$ with $w\in W^R_r$ and $\mathbf{t}\in T_w(F)$ of $f$ vanish.	
\end{prop}

Assume that a function $\phi\in\mathcal{C}_c^\infty(S_r(F))$ satisfies $I(\mathbf{t},\phi)=0$ for all $\mathbf{t}\in T_r(F)$. We have then
$$I(\mathbf{t},\phi)=\Delta_r(\mathbf{t})J(\mathbf{t},0)$$ for all $\mathbf{t}\in T_r(F)$. Applying the main theorem for the couple of functions $(\phi,0)$, we obtain the following density theorem:
\begin{prop}
	If the diagonal orbital integrals $I(\mathbf{t},\phi)$ of a function $\phi\in \mathcal{C}_c^\infty(S_r(F))$ vanishes for all $\mathbf{t}\in T_r(F)$, then all the orbital integrals $I(w\mathbf{t},\phi)$ with $w\in W^R_r$ and $\mathbf{t}\in T_w(F)$ of $\phi$ vanish.
\end{prop}

We now state the organization of this manuscript. The main tool we use to prove the main theorem (Theorem \ref{thm1}) is Shalika germs which describe the asymptotic behavior of the orbital integrals. In Section 2 (resp. 3) we recall this asymptotic behavior of the integral $J$ (resp. the integral $I$) and do some calculation for the germ functions. The main theorem is proved by induction in Section 4. In this section, we introduce some intermediate integrals which are designed to use inductive argument. The germ relations are used to handle the cases when we can not use directly these intermediate integrals. 

The proof of the main theorem closely follows the guidelines of \cite{J2}, the new ingredient is the occurrence of the new factors $\Delta_r$ and $\Delta'_r$. 

In the following sections, we need (to recall) some properties of the Hilbert symbol and of the Weil constant.
\begin{prop}[cf. {\cite[Proposition 1]{M}, \cite{VC1}}] For $a,b,c\in F^*$ we have:
	\begin{enumerate}
		\item $(a,b)=(b,a)$.
		\item $(a,bc)=(a,b)(a,c)$.
		\item If $v(a),v(b)$ are even, then $(a,b)=1$.
		\item If $v(a)$ is even, $v(b)$ is odd, then $(a,b)=(a,\varpi)$.
		\item If $v(a),v(b)$ are odd, then $(a,b)=(-ab,\varpi)$.
		\item If $a\in k^*$, then $(a,\varpi)=\zeta(a)$.
	\end{enumerate}
\end{prop}
\begin{prop}[cf. {\cite[Proposition 2]{M}}] Let $\psi$ be a non-trivial additive character of level 0. We have:
	\begin{enumerate}
		\item $\gamma(a,\psi)=1$ if $v(a)$ is even.
		\item $\gamma(ab,\psi)=\gamma(a,\psi)\gamma(b,\psi)(a,b)$.
	\end{enumerate}
\end{prop}


\section{Computation of the germ on $J$ side}\label{sec1}
For a convenience, we shall rewrite the orbital integral $J$. 
Suppose that $w\in W_r^R$ and $\mathbf{t}\in T_w$. Then let $P_w=M_wN_w$ be the standard parabolic subgroup which has Levi factor $M_w$. Let $V_w=N_r\cap M_w$. We have $$(N_r\times N_r)_{w_{\G_r}w\mathbf{t}}=N_r^{w}:=\{(n_1,n_2)|w_{\G_r}n_1^{-1}w_{\G_r}wn_2=w\}$$ for all $\mathbf{t}\in T_w$. Furthermore, if $(n_1,n_2)\in N_r^w$ then $n_2,{}^t(n^{w_{\G_r}}_1):={}^t(w_{\G_r}n_1w_{\G_r})\in V_{w}$ and $n_2=wn^{w_{\G_r}}_1w$. It implies that any point of the orbit of $w_{\G_r}w\mathbf{t}$ under the action of $N_r\times N_r$ can be uniquely written in the following form
$$\mu(u_1,u_2,v)=w_{\G_r}{}^tu_1w\mathbf{t}vu_2$$
with $u_i\in N_w(F)$ and $v\in V_w(F)$. Note that the map $\mu$ is an isomorphism of analytic varieties (over $F$) $N_w(F)\times N_w(F)\times V_w(F)$ onto the orbit of $w\mathbf{t}$.  Thus 
$$J(w\mathbf{t},f)=\int_{N_w(F)\times N_w(F)\times V_w(F)}f((w_{\G_r}{}^tu_1w\mathbf{t}vu_2,1))\theta(u_1u_2v)du_1dvdu_2.$$
Denote by $f'$ the function $g\mapsto f((w_{\G_r}g,1)),\,\forall g\in \G_r$, then $f'\in \mathcal{C}_c^\infty(\G_r(F
))$. The integral $J$ is then the orbital Kloosterman integral $\mathrm{Kloos}(w\mathbf{t};f')$ which is considered in \cite{J3} (in loc. cit., it is denoted by $I(w\mathbf{t};f')$). 
This integral converges and defines a smooth function on $T_w(F)$.

We let $M$ be the standard Levi-subgroup of type $(r-1,1)$. The corresponding element $w_M$ is $\left(\begin{smallmatrix}
w_{\G_{r-1}}&0\\
	0& 1
\end{smallmatrix}\right)$. We denote by $T_{w_M}^{w_{\G_r}}$ the set of matrices $\mathbf{t}\in T_{w_M}(F)$ such that $\det(\mathbf{t})=\det(w_M)\det(w_{\G_r})$. There exists a smooth function $K_{w_M}^{w_{\G_r}}$ on $T_{w_M}^{w_{\G_r}}$ (cf. \cite{J3,J2}) with the following property: for any $f\in \mathcal{C}_c^{\infty}(\G_r(F))$ there is a smooth function of compact support $\omega_f$ on $T_M$ such that 
\begin{equation*}
	J(w_M\mathbf{t},f)=\omega_f(\mathbf{t})+\sum_{\alpha\beta=\mathbf{t}}K_{w_M}^{w_{\G_r}}(\alpha)J(w_{\G_r}\beta,f).
\end{equation*}
Here, we still denote by $f$ the genuine function in $\mathcal{C}_c^{\infty}(\tG_r(F))$ defined by $(g,z)\mapsto zf(g)$ with $f\in \mathcal{C}_c^{\infty}(\G_r(F))$. The sum is over all pairs in $$\{(\alpha,\beta)\in (T_{w_M}^{w_{\G_r}},T_{\G_r}(F))\,| \,\alpha\beta=\mathbf{t} \}.$$
The function $K_{w_M}^{w_{\G_r}}$ is the \textbf{germ} (for the side $J$) along the subset $T_{w_M}^{w_{\G_r}}$. It is not unique.

 
\begin{prop}[cf. {\cite[Proposition 3.1]{J2}}]\label{prop1}Suppose that the residual characteristic of $F$ is larger than $r$. 
Let $$\alpha=\D(a,\dots,a,a^{1-r}\det(w_Mw_{\G_r})).$$
Then, for $|a|$ sufficiently small,

\begin{equation*}
	K_{w_M}^{w_{\G_r}}(\alpha)=|a|^{-\frac{(r-1)^2}{2}}\psi\left(\frac{r}{2a}\right)\left(\frac{r}{2^{r-1}},a^{-1}\right)\gamma(a^{-1},\psi)^{r-1}.
\end{equation*}
\end{prop}
Note that the characteristic of $k$ is larger than $r$, we have then $\frac{i+1}{2i}\in \mathcal{O}^*$ for all $1\leq i\leq r-1$. Since in our case $\theta(n)=\psi(\frac{1}{2}\sum_{i=2}^rn_{i-1,i})$ (is not $\theta(n)=\psi(\sum_{i=2}^rn_{i-1,i})$ in loc. cit.), our obtained formula (using the same processing) is a bit different from the one of Jacquet (in loc. cit.). 

\section{Computation of the germ on $I$ side}
The discussion of \cite[\S 2]{J3} applies to our situation where $\G_r(F)$ is replaced by $S_r(F)$ and the group $N_r\times N_r$ ($N_r\times N_r$ acts on $\G_r$ by $g\mapsto {}^tngn'$) by the group $N_r(F)$ acting on $S_r(F)$. Suppose that $w\in W_r^R$ and $\mathbf{t}\in T_w$. Then let $P_w=M_wN_w$ be the standard parabolic subgroup which has Levi factor $M_w$. Let $V_w=N_r\cap M_w$. We have $$(N_r)_{w\mathbf{t}}=(N_r)_{w}:=\{n\in N_r|{}^{t}nwn=w\}$$ for all $\mathbf{t}\in T_w$. Furthermore, $(N_r)_w$ is the set of $n\in V_{w}$ such that ${}^{t}nwn=w$. 

Any element of the orbit of $w\mathbf{t}$  can be  written in the form
$${}^tu{}^t vw\mathbf{t}vu$$
with $u\in N_w(F)$ and $v\in V_w(F)$. Denote by $v_1=w{}^tvwv$. We have then $v_1$ is an element of $V^1_w:=\{v\in V_w|w{}^tvw=v\}$. Thus in fact any point of the orbit of $w\mathbf{t}$ can be written uniquely in the form
$$\nu(u,v_1):={}^tuw\mathbf{t}v_1u$$
with $u\in N_w(F)$ and $v_1\in V^1_w(F)$. The map $\nu$ is a diffeomorphism of $N_r(F)\times V^1_w(F)$ onto the orbit $w\mathbf{t}$ in $S_r(F)$. Thus
$$I(w\mathbf{t},\phi)=\int_{N_w(F)\times V^1_w(F)}\phi({}^tuw\mathbf{t}v_1u)\theta(u^2v_1)dudv_1.$$

Now, we let $M$ be the standard Levi-subgroup of type $(r-1,1)$. As before, there exists a smooth function $L_{w_M}^{w_{\G_r}}$ on $T_{w_M}^{w_{\G_r}}$  with the following property: for any $\phi\in \mathcal{C}_c^\infty(S_r(F))$ there is a smooth function of compact support $\omega_\phi$ such that
$$I(w_M\mathbf{t},\phi)=\omega_\phi(\mathbf{t})+\sum_{\alpha\beta=\mathbf{t}}L_{w_M}^{w_{\G_r}}(\alpha)I(w_{\G_r}\beta,\phi).$$
The sum is over all pairs in $\{(\alpha,\beta)\in (T_{w_M}^{w_{\G_r}},T_{\G_r}(F))\,| \,\alpha\beta=\mathbf{t} \}.$
The function $L_{w_M}^{w_{\G_r}}$ is the \textbf{germ} (for the side $I$) along the subset $T_{w_M}^{w_{\G_r}}$. It is not unique. 

Let $\mathbf{t}=\D(a,\dots,a,a^{1-r}\det(w_Mw_{\G_r}))$. Since $\omega_\phi$ is a smooth function of compact support, we can choose $|a|$ small enough such that $\omega_\phi(\mathbf{t})=0$. We consider the pair $(\alpha,\beta)\in (T_{w_M}^{w_{\G_r}},T_{\G_r}(F))$ such that $\alpha\beta=\mathbf{t}$. Since $\det(\alpha)=\det(w_Mw_{\G_r})=\det(\mathbf{t})$ (by definition), we have $\det(\beta)=1$. Moreover, $\beta=\D(z,z,\dots,z)$ with $z^{r}=1$.
 
We denote by $[x]$ the integral part of a real number $x$. Let $K_m:=\mathrm{Id}_r+\varpi^m\mathfrak{gl}_r(\0)$ be the principal congruence subgroup of $\G_r(F)$. We denote by $c_1(r)$ the scalar 
$$c_1(r):=\mathrm{vol}(\varpi^m\0)^{-\left[\frac{r^2}{4}\right]}.$$
Let $\phi$ be the product of the characteristic function of $w_{\G_r}K_m\cap S_r(F)$ and the scalar $c_1(r)$. We have the following lemma: 
\begin{lemm} Let $\beta=\D(z,z,\dots,z)$ with $z^r=1$ and $\phi$ as above. For $m$ large enough, we have then  
	$$I(w_{\G_r}\beta,\phi)=\begin{cases}
		1,&\text{if } z=1,\\
		0,&\text{otherwise}. 
	\end{cases}$$
\end{lemm} 
\begin{proof}
	Firstly, we calculate the integral $I(w_{\G_r},\phi)$. After a unimodular change of variables, this integral has the form
	$$I(w_{\G_r},\phi)=\int\phi(x)\psi\left(\frac{\sum_{i+j=r+2}x_{i,j}}{2}\right)\otimes dx_{i,j}$$
	where  $x=(x_{i,j})$ is a symmetric matrix such that 
	$$x_{i,j}=\begin{cases}
	0,&\text{if } i+j<r+1,\\
	1,&\text{if } i+j=r+1.
	\end{cases}$$
    The variables are the entries $x_{i,j}\in F$ with $i+j\geq r+2,\,i<j$, the entries $x_{i,i}\in F$ with $2i\geq r+2$. 
    
    The number of entries $x_{i,j}$ with $i+j\geq r+2$ is $\frac{r(r-1)}{2}$. The number of entries $x_{i,i}$ with $2i\geq r+2$ is $r-\left[\frac{r+3}{2}\right]+1=\left[\frac{r}{2}\right]$. So the number of variables of above integral is $\frac{\frac{r(r-1)}{2}-\left[\frac{r}{2}\right]}{2}+\left[\frac{r}{2}\right]=\left[\frac{r^2}{4}\right]$.
    
    Now we take $\phi$ be a product of the characteristic function of $w_{\G_r}K_m$ and the scalar $c_1(r)$, this integral is equal to 
    $$c_1(r)\int\psi\left(\frac{\sum_{i+j=r+2}x_{i,j}}{2}\right)\otimes dx_{i,j}$$
    integrated over the domain:
    \begin{align*}
    	x_{i,j}&\equiv 0 \mod \varpi^m\0\,\text{for } i+j\geq r+2.
    \end{align*}
    
    Moreover, since $\psi$ is of order $0$, we have $\psi\left(\frac{\sum{x_{i,j}}}{2}\right)=1$. It implies that 
    $$\int\psi\left(\frac{\sum_{i+j=r+2}x_{i,j}}{2}\right)\otimes dx_{i,j}=\mathrm{vol}(\varpi^m\0)^{\left[\frac{r^2}{4}\right]}=c_1(r)^{-1}.$$
    In consequence, the first assertion is proved.
    
    Choosing $m$ large enough such that $z \not \in 1+\varpi^m\0$ for all $z$ which satisfy $z^r=1$ and $z\neq 1$. We have then ${}^tnw_{\G_r}\beta n\not\in w_{\G_r}K_m,\forall n\in N_r(F)$. It follows the second assertion.
\end{proof}
As a consequence, we obtain that  
\begin{equation}\label{i1}
L_{w_M}^{w_{\G_r}}(\alpha)=I(w_M\alpha,\phi),
\end{equation}
where $$\alpha=\D(a,\dots,a,a^{1-r}\det(w_Mw_{\G_r})),$$
and $|a|$ is small enough.

Our goal of this section is to compute the germ $L_{w_M}^{w_{\G_r}}$, or, what amounts to the same, the integral $I(w_M\alpha,\phi)$ where $\phi$  is the function defined above.

Let $P$ be the parabolic subgroup of type $(r-1,1)$ and $N_M$ its unipotent radical. Then $P=MN_M$. We denote by $V_M=N_r\cap M$ and $V_M^1=\{v\in V_M|w_M{}^tvw_M=v\}$. If $u\in V_M$ then $u=(u_{i,j})$ of the following form
\begin{eqnarray*}
u_{i,j}&=&0 \text{ for $(i,j)\not \in\{(i,i),(i,r)|1\leq i\leq r\}$},\\
u_{i,i}&=&1 \text{ for $1\leq i\leq r$}.
\end{eqnarray*}
If $v\in V_M^1$, then $v=(v_{i,j})$ of the following form
\begin{eqnarray*}
	v_{i,r}&=&0 \text{ for $1\leq i\leq r-1$},\\
	v_{i,i}&=&1 \text{ for $1\leq i\leq r$},\\
	v_{i,j}&=&0 \text{ for $i>j$},\\
	v_{i,j}&=&v_{(r-j),(r-i)} \text{ for $1\leq i<j\leq r-1$}.
\end{eqnarray*}
The orbital integral $I(w_M\alpha,\phi)$ can be written as follows
\begin{equation}\label{eq1}
I(w_M\alpha,\phi)=\int \phi({}^tuw\mathbf{t}vu)\theta(u^2v)(\otimes_{i=1}^{r-1} du_{i,r})\otimes dv_{i,j},
\end{equation}
The variables are the entries $u_{i,r}\in F$ with $1\leq i\leq r-1$, the entries $v_{i,r-i}\in F$ with $1\leq i<r-i$, the entries $v_{i,j}\in F$ with $1\leq i<j\leq r-1,\,i\leq r-j$.
Denote by $c_2(r)$ the number of variables of above integral. We have then 
$$c_2(r)=(r-1)+\left[\frac{r-1}{2}\right]+\frac{1}{2}\left(\frac{(r-2)(r-1)}{2}-\left[\frac{r-1}{2}\right]\right)=\left[\frac{r^2+2r-3}{4}\right].$$
After a unimodular change of variables, (\ref{eq1}) can be written as 
\begin{equation*}
	I(w_M\alpha,\phi)=|a|^{-c_2(r)}\int \phi(x)\psi\left(\frac{\sum_{i=1}^rx_{i,r+1-i}}{2a}\right)\otimes dx_{i,j},
\end{equation*}
where $x=(x_{i,j})$ denotes a matrix of the following form 
\begin{eqnarray*}
x_{i,j}&=&0 \text{ for } i+j<r,\\
x_{i,j}&=&a \text{ for } i+j=r,\\
x_{i,j}&=&x_{j,i}.
\end{eqnarray*}
The variables are the entries $x_{i,j}\in F$ with $i+j\geq r+1,\,i<j$, the entries $x_{i,i}\in F$ with $2i\geq r+1$, except the entry $x_{r,r}$ which is a dependent variable. The entry $x_{r,r}$ can be computed (from the condition that the determinant of the matrix $x$ be $\det(w_{\G_r})$) by 
$$a^{r-1}\det(w_M)x_{r,r}+\det\left(\begin{matrix}
0&\cdots&0&a&x_{1,r}\\
\vdots&\iddots&\iddots&\iddots&\vdots\\
0&\iddots&\iddots&x_{r-2,r-1}&x_{r-2,r}\\
a&\iddots&x_{r-1,r-2}&x_{r-1,r-1}&x_{r-1,r}\\
x_{r,1}&\dots&x_{r,r-2}&x_{r,r-1}&0\end{matrix}\right)=\det(w_{\G_r}).$$

Let $\mathcal{I}$ be a subset of $\{(i,j)|1\leq i\leq j\leq r\}$. We denote by $x^{\mathcal{I}}$ the matrix obtained from $x$ by replacing the entries $x_{i,j}$ and $x_{j,i}$ by $0$ with $(i,j)\in \mathcal{I}$. For instance, the above condition of $x_{r,r}$ can be written as follows
$$a^{r-1}\det(w_M)x_{r,r}+\det(x^{\{(r,r)\}})=\det(w_{\G_r}).$$

Since $\phi$ is the product of the characteristic function of $w_{\G_r}K_m$ and the scalar $c_1(r)$, the above integral is equal to 
$$|a|^{-c_2(r)}c_1(r)\int \psi\left(\frac{\sum_{i=1}^rx_{i,r+1-i}}{2a}\right)\otimes dx_{i,j}$$ 
integrated over the set: 
\begin{align*}
&x_{i,j}\equiv 1 \mod \varpi^m\0\, \text{for } i+j=r+1,\\
&x_{i,j}\equiv 0 \mod \varpi^m\0\, \text{for } i+j>r+1,\,j>i, \,(i,j)\neq (r,r),\\
&x_{r,r}\equiv 0 \mod \varpi^m\0.
\end{align*}

The last condition can be written as follows
$$\det(x^{\{(r,r)\}})\equiv \det(w_{\G_r})\mod a^{r-1}\varpi^m\0.$$
By single out the variable $x_{2,r}$ ($=x_{r,2}$) from the left hand side, we have:
\begin{equation}\label{squareroot}
2x_{2,r}ay+x_{2,r}^2a^2z+\det(x^{\{(r,r),(2,r)\}})\equiv \det(w_{\G_r})\mod a^{r-1}\varpi^m\0,
\end{equation}
where $y,z\in 1+\varpi^m\0$. Both $y$ and $z$ depend only on the variables $x_{i,j}$ with $(i,j)\neq (2,r)$. 

We denote $T:=\det(x^{\{(r,r),(2,r)\}})-\det(w_{\G_r})$. Since $x_{2,r}\equiv 0 \mod \varpi^m\0$, we have $T\equiv 0 \mod a\varpi^m\0$. We have then $y^2-zT\in 1+\varpi^m\0$. It implies that $y^2-zT$ has a square root in $F$ for $m$ large enough. Assume that $\mu$ is a square root of $y^2-zT$. Since $y^2-zT\in 1+\varpi^m\0$, we have $(\mu-1)(\mu+1)\in \varpi^m\0$. It implies that $\max\{v(\mu-1),v(\mu+1)\}\geq \frac{m}{2}$. We can suppose that $v(\mu-1)=\max\{v(\mu-1),v(\mu+1)\}$ (otherwise, we change $\mu$ to $-\mu$). We get then $\mu=1+ \varpi^{m/2}\0$ and $\mu+1=2+1+ \varpi^{m/2}\0\in\0^*$. Thus $v(\mu-1)\geq m$, i.e $\mu\in 1+\varpi^m\0$. We have proved that there exists a square root $\mu$ of $y^2-zT$ such that 
$\mu\in 1+\varpi^m\0$.

The condition (\ref{squareroot}) can be read
$$a^2z\left(x_{2,r}-\frac{-y-\mu}{az}\right)\left(x_{2,r}-\frac{-y+\mu}{az}\right)\equiv 0 \mod a^{r-1}\varpi^m\0.$$

For $|a|$ small enough (i.e the valuation of $a$ large enough), from the definition of $y,z$ and $T$, we have $\frac{-y+\mu}{az}\in a^{-1}\varpi^m\0$ and $\frac{-y-\mu}{az} \in -2a^{-1}+a^{-1}\varpi^m\0 $. Thus above condition is equivalent to 
$$x_{2,r}\equiv \frac{-y+\mu}{az}\mod a^{r-2}\varpi^m\0.$$



Using this condition, we can integrate the variable $x_{2,r}$ away from the orbital integral $I$ to obtain the scalar factor $|a|^{r-2}\mathrm{vol}(\varpi^m\0)$ multiple with a new integral. The new integral has the same form of the old one but the domain of integration is defined by:
\begin{align*}
x_{i,j}&\equiv 1 \mod \varpi^m\0\, \text{for } i+j=r+1,\\
x_{i,j}&\equiv 0 \mod \varpi^m\0\, \text{for } i+j>r+1, \,(i,j)\neq (r,r),(2,r)\\
T&\equiv 0 \mod a\varpi^m\0.
\end{align*}

The determinant of $x^{\{(r,r),(2,r)\}}$ has the form 
$$ \prod_{i+j=r+1}{x_{i,j}}\det(w_{\G_r})+aX$$
with $X\in \varpi^m\0$. Thus the condition on $T$ can read
$$\prod_{i+j=r+1}x_{i,j}\equiv 1 \mod a\varpi^m\0.$$

Now, we integrate over the variables $x_{i,j}$ with $i+j>r+1,\,(i,j)\neq (r,r),(2,r)$ and we get 
\begin{equation}\label{i2}
I(w_M\alpha,\phi)=|a|^{r-2-c_2(r)}c_1(r)\mathrm{vol}(\varpi^m\0)^{c_2(r)-\left[\frac{r+1}{2}\right]+1}I(a,r),
\end{equation}
where the function $I(a,r)$ is defined as follows.

If $r=2\ell$, then
\begin{equation*}
I(a,r):=\mathrm{vol}(\varpi^m\0)^{-1}\int\psi\left(\frac{x_1+x_2+\dots+x_\ell}{a}\right)\otimes dx_i.
\end{equation*}
The domain of integration is defined by 
\begin{align*}
	x_i&\equiv 1\mod \varpi^m\0\\
	x_1^2x_2^2\dots x_\ell^2&\equiv 1 \mod a\varpi^m\0.
\end{align*}

If $r=2\ell+1$, then
\begin{equation*}
I(a,r):=\mathrm{vol}(\varpi^m\0)^{-1}\int\psi\left(\frac{2x_1+2x_2+\dots+2x_\ell+x_{\ell+1}}{2a}\right)\otimes dx_i.
\end{equation*}
The domain of integration is defined by 
\begin{align*}
x_i&\equiv 1\mod \varpi^m\0\\
x_1^2x_2^2\dots x_\ell^2x_{\ell+1}&\equiv 1 \mod a\varpi^m\0.
\end{align*}

We have:
\begin{prop}\label{charac2}
	Suppose that the residual characteristic of $F$ is larger than $2r+1$. Then, if $m$ is large enough:
	$$I(a,r)=|a|^{\frac{1}{2}{\left[\frac{r-1}{2}\right]}+1}\psi\left(\frac{r}{2a}\right)\left(\frac{r}{2^{r-1}},a^{-1}\right)\gamma\left(a^{-1},\psi\right)^{\left[\frac{r-1}{2}\right]},$$
	for $|a|$ sufficiently small. 
\end{prop}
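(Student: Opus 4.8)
The plan is to compare the two explicit integrals $I(a,r)$ and $J(a,r)$ directly, by performing the $x_i$-integrations one variable at a time and recognizing each step as a Gaussian (Weil) integral. Recall that
$$J(a,r)=\mathrm{vol}(\varpi^m\0)^{-1}\int\Psi\!\left[\frac{x_1+\cdots+x_r}{2a}\right]\otimes dx_i$$
over $x_i\in 1+\varpi^m\0$ with $\prod_{i=1}^r x_i\equiv 1\bmod a\varpi^m\0$, whereas $I(a,r)$ has roughly half as many variables but the defining congruence involves $x_1^2\cdots x_\ell^2$ (and $x_{\ell+1}$ when $r$ is odd). So the first step is to write $J(a,r)$ in a form adapted to the squares: in $J(a,r)$, group the $r$ variables into $\ell=\lfloor r/2\rfloor$ pairs (plus one leftover variable in the odd case) and, in each pair $(y_j,y_j')$, introduce the product $x_j=y_jy_j'$ as a new variable together with, say, $y_j$ itself. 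After this substitution the constraint $\prod x_j\equiv 1$ matches the shape of the $I$-side constraint, and the residual $y_j$-integrals are exactly what will produce the Weil constants.

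Next I would carry out the $y_j$-integration in each pair. Writing $y_j=1+\varpi^m u_j$ and $x_j=1+\varpi^m t_j$ with $t_j,u_j\in\0$, the relation $x_j=y_jy_j'$ forces $y_j'=(1+\varpi^m t_j)/(1+\varpi^m u_j)$, and expanding $\Psi$ of the relevant linear form in the exponent produces, after completing the square, an integral of the type $\int_{\0}\Psi(c\,u_j^2+\cdots)\,du_j$ for an explicit $c$ depending on $a^{-1}$. By the defining property of $\gamma(a,\Psi)$ quoted just before the Proposition, each such integral contributes a factor $|a^{-1}|^{-1/2}\gamma(a^{-1},\Psi)^{\pm 1}$ — i.e. $|a|^{1/2}\gamma(a^{-1},\Psi)^{-1}$ up to a sign and a volume factor — together with the remaining Gaussian in the complementary variable evaluated at $-c^{-1}$. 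Doing this for all $\ell$ pairs accounts for the claimed prefactor $|a|^{\ell/2}\gamma(a^{-1},\Psi)^{-\ell}$ with $\ell=\lfloor r/2\rfloor$; in the odd case the leftover variable $x_{\ell+1}$ is handled by the constraint and contributes nothing new, consistently with the form of $I(a,r)$ for $r=2\ell+1$.

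The hypothesis that the residual characteristic exceeds $2r+1$ should enter in two places, and these are the points I would be most careful about. First, it guarantees that all the constants appearing when one completes the square (the factors $2$, the partial products $x_1\cdots x_{j-1}$, etc.) are units, so that the quadratic terms are genuinely nondegenerate and the Weil-constant formula applies with the stated exponent; second, together with "$m$ large enough" it ensures that quantities of the form $1+\varpi^m(\cdots)$ that arise (such as the $u-vT$ in the $I$-side computation, and the analogous expressions here) admit square roots lying in $1+\varpi^m\0$, so that the changes of variable are well defined and measure-preserving. I expect the main obstacle to be purely bookkeeping: matching the sign of $a$ versus $a^{-1}$ inside $\gamma$, tracking the powers of $\mathrm{vol}(\varpi^m\0)$ so that they cancel and leave exactly $|a|^{\ell/2}\gamma(a^{-1},\Psi)^{-\ell}$ with no spurious volume factor, and checking that the count of variables on the two sides differs by precisely $\ell$ — which is consistent with $c_2(r)$ and the volume exponents recorded above. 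Once the pairing substitution is set up correctly, each individual $y_j$-integration is a routine Weil-integral evaluation, and the Proposition follows by iterating it $\ell$ times.
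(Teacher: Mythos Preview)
Your approach is genuinely different from the paper's. The paper does \emph{not} compare $J$ and $I$ directly; instead it evaluates each of $I(a,r)$ and $J(a,r)$ separately as a closed-form expression and then takes the quotient. Concretely: in $I(a,r)$ one eliminates a single variable using the product congruence, Taylor-expands the resulting phase $\delta$ at the origin, diagonalises the quadratic part (for $r=2\ell+1$ this is the form $3\sum u_i^2+4\sum_{i<j}u_iu_j$, brought to $\sum\frac{2i+1}{2i-1}Y_i^2$ by a unipotent change of variables), and then applies stationary phase to obtain
\[
I(a,r)=|a|^{\frac{[(r-1)/2]}{2}+1}\Psi\!\Big(\tfrac{r}{2a}\Big)(r,a^{-1})(1/2,a^{-1})^{r-1}\gamma(a^{-1},\Psi)^{[(r-1)/2]}.
\]
For $J(a,r)$ the paper simply quotes the analogous closed formula from \cite{J2}, and the Proposition follows by dividing. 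The hypothesis on the residual characteristic is used only to make the coefficients $\frac{2i+1}{2i-1}$ units, so that the Hilbert-symbol and $\gamma$ factors simplify.

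Your pairing idea is attractive and could be made to work, but as written there is a gap. With $x_j=y_jy_j'$ the $J$-constraint becomes $\prod_j x_j\equiv 1$, while the $I$-constraint is $\prod_j x_j^2\equiv 1$; these agree on $1+\varpi^m\mathcal O$ only after you observe that squaring is a bijection there and invoke $m$ large --- a point you do not make. More seriously, integrating out $y_j$ in $\Psi\big((y_j+x_j/y_j)/(2a)\big)$ by stationary phase leaves the critical value $2\sqrt{x_j}$ in the phase, so the surviving integrand is $\Psi\big(\sum_j\sqrt{x_j}/a\big)$, not $\Psi\big(\sum_j x_j/a\big)$. You therefore need a further substitution $x_j\mapsto x_j^2$ (which is exactly what reconciles both constraint and phase with $I(a,r)$), and you must then track its Jacobian and the accompanying volume factors. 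Your sketch stops at ``the constraint matches the shape'' and ``the residual $y_j$-integrals produce the Weil constants'', and leaves the exponent of $\gamma$ as $\pm 1$; the missing square-root step is precisely what fixes both the exponent and the power of $|a|$. If you insert it, your route becomes a clean alternative to the paper's compute-both-sides argument, with the advantage of never needing the explicit diagonalisation lemma or the closed formula for $J(a,r)$ from \cite{J2}.
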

\begin{proof}
	Firstly, we consider the case $r=2\ell+1$. We change variables 
	$$x_{\ell+1}=\left(\prod_{i=1}^{\ell}x_i^{-2}\right)t$$
	with $t\in 1+a\varpi^m\0$ and integrate over $t$. We obtain
	$$I(a,2\ell+1)=|a|\int \psi\left(\frac{\delta}{2a}\right)\otimes dx_i,$$
	where the phase function $\delta$ is given by: 
	$$\delta=\sum_{i=1}^{\ell}2x_i+\frac{1}{\prod_{i=1}^{\ell}x_i^2}.$$
	We set $x_i=1+u_i$ with $u_i\in \varpi^m\0$. This function can be written as
	$$\delta=2\ell+\sum_{i=1}^{\ell}2u_i+\prod_{i=1}^{\ell}\frac{1}{1+2u_i+u_i^2}.$$
	Now we consider the Taylor expansion of this function at the origin. It has the form:
	$$(2\ell+1) +3\sum_{i=1}^{\ell}u_i^2+4\sum_{1\leq i<j\leq \ell}u_iu_j+\text{higher digree terms}.$$
	
	Since the quadratic form $$3\sum_{i=1}^{\ell}X_i^2+4\sum_{1\leq i<j\leq \ell}X_iX_j$$ is equivalent to the quadratic form
	$$\sum_{i=1}^{\ell}\frac{2i+1}{2i-1}Y_i^2$$ by a unipotent transformation (see. \cite[Lemma 5.1]{J2}), after a unimodular change variables, this Taylor expansion may be written in the form:
	$$\delta=(2\ell+1)+\sum_{i=1}^{\ell}\frac{2i+1}{2i-1} y_i^2+\text{higher degree terms}.$$
	
	We choose $m$ large enough such that the origin is the only critical point in the domain of integration. Using the principle of stationary phase, there exists a neighbourhood $0\in\Omega$ in $F$ such that for $|a|$ small enough $I(a,2\ell+1)$ is the product of the factors:
	$$|a|\psi\left(\frac{2\ell+1}{2a}\right),$$
	$$\int_{\Omega}\psi\left(\frac{2i+1}{2(2i-1)a}y_i^2\right)dy_i=\left|\frac{2i+1}{(2i-1)a}\right|^{-1/2}\gamma\left(\frac{2i+1}{(2i-1)a},\psi\right),\,1\leq i\leq \ell.$$
	
	Using the property of Weil constant that $\gamma(bc,\psi)=\gamma(b,\psi)\gamma(c,\psi)(b,c)$, we have: 
	$$\gamma\left(\frac{2i+1}{(2i-1)a},\psi\right)=\gamma\left(\frac{2i+1}{2i-1},\psi\right)\gamma(a^{-1},\psi)\left(\frac{2i+1}{2i-1},a^{-1}\right).$$
		
	Since the residual characteristic of $F$ larger than $2r+1$, we have $\frac{2i+1}{2i-1}\in \0^*$ for all $1\leq i\leq \ell$. So above equation simplifies to
	$$\gamma\left(\frac{2i+1}{(2i-1)a},\psi\right)=\gamma(a^{-1},\psi)\left(\frac{2i+1}{2i-1},a^{-1}\right).$$ 
	
	Using the formula $(b,c)(b',c)=(bb',c)$ and $\frac{2i+1}{2i-1}\in\0^*$ for all $1\leq i\leq \ell$ , we have: 
	\begin{eqnarray*}
	I(a,2\ell+1)&=&|a|^{\frac{\ell}{2}+1}\psi\left(\frac{2\ell+1}{2a}\right)(2\ell+1,a^{-1})\gamma(a^{-1},\psi)^{\ell}\\
	&=&|a|^{\frac{\ell}{2}+1}\psi\left(\frac{2\ell+1}{2a}\right)(2\ell+1,a^{-1})\gamma(a^{-1},\psi)^{\ell}\left(\frac{1}{2},a^{-1}\right)^{2\ell}\\
	&=&|a|^{\frac{\ell}{2}+1}\psi\left(\frac{2\ell+1}{2a}\right)\left(\frac{2\ell+1}{2^{2\ell}},a^{-1}\right)\gamma(a^{-1},\psi)^{\ell}.
	\end{eqnarray*}

	For the case $r=2\ell$. We set
	$$x_\ell=t\left(\prod_{i=1}^{\ell-1}x_i\right)^{-1}.$$
	Since $\prod_{i=1}^{\ell}x_i^2\equiv 1 \mod a\varpi^m\0$, we have $t^2\equiv 1\mod a\varpi^m\0$. For $m$ large enough and $|a|$ small enough, this condition is equivalent to $t\equiv \pm 1 \mod a\varpi^m\0$. Moreover $t=\prod_{i=1}^\ell x_i\equiv 1 \mod \varpi^m\0$, so $t\equiv 1 \mod a\varpi^m\0$. We now integrate over $t$ to get 
	$$I(a,2\ell)=|a|\int\psi\left(\frac{\delta}{a}\right)\otimes dx_i,$$
	where the phase function $\delta$ is given by: 
	$$\delta=\sum_{i=1}^{\ell-1}x_i+\prod_{i=1}^{\ell-1}\frac{1}{x_i}.$$
	
	We set $x_i=1+u_i$ with $u_i\in \varpi^m\0$. This function can be written as:
	$$\delta=\ell-1+\sum_{i=1}^{\ell-1}u_i+\prod_{i=1}^{\ell-1}\frac{1}{1+u_i}.$$
	The Taylor expansion of this function at the origin has the form:
	$$\ell+\sum_{i=1}^{\ell-1}u_i^2+\sum_{1\leq i<j\leq \ell-1}u_iu_j+\text{ higher degree terms}.$$
	
	Since the quadratic form $$\sum_{i=1}^{\ell-1}X_i^2+\sum_{1\leq i<j\leq \ell-1}X_iX_j$$ is equivalent to the quadratic form
	$$\sum_{i=1}^{\ell-1}\frac{i+1}{2i}Y_i^2$$ by a unipotent transformation (see. \cite[Lemma 2.1]{J2}), after a unimodular change variables, this Taylor expansion may be written in the form:
	$$\delta=\ell+\sum_{i=1}^{\ell-1}\frac{i+1}{2i} y_i^2+\text{higher degree terms}.$$
	
	We choose $m$ large enough such that the origin is the only critical point in the domain of integration. Using the principle of stationary phase, there exists a neighbourhood $0\in\Omega$ in $F$ such that for $|a|$ small enough $I(a,2\ell)$ is the product of the factors:
	$$|a|\psi\left(\frac{\ell}{a}\right),$$
	$$\int_{\Omega}\psi\left(\frac{i+1}{2ai}y_i^2\right)dy_i=\left|\frac{i+1}{ai}\right|^{-1/2}\gamma\left(\frac{i+1}{ai},\psi\right),\,1\leq i\leq \ell-1 .$$
	Using the property of Weil constant that $\gamma(bc,\psi)=\gamma(b,\psi)\gamma(c,\psi)(b,c)$, we have: 
	$$\gamma\left(\frac{i+1}{ai},\psi\right)=\gamma\left(\frac{i+1}{i},\psi\right)\gamma(a^{-1},\psi)\left(\frac{i+1}{i},a^{-1}\right).$$
	
	Since the residual characteristic of $F$ larger than $2r+1$, we have $\frac{i+1}{i}\in \0^*$ for all $1\leq i\leq \ell-1$. So above equation simplifies to
	$$\gamma\left(\frac{i+1}{ai},\psi\right)=\gamma(a^{-1},\psi)\left(\frac{i+1}{i},a^{-1}\right).$$ 
	
	Using the formula $(b,c)(b',c)=(bb',c)$ and $\frac{i+1}{i}\in\0^*$ for all $1\leq i\leq \ell-1$, we have: 
	\begin{eqnarray*}
	I(a,2\ell)&=&|a|^{\frac{\ell+1}{2}}\psi\left(\frac{\ell}{a}\right)(\ell,a^{-1})\gamma(a^{-1},\psi)^{\ell-1}\\
	&=&|a|^{\frac{\ell+1}{2}}\psi\left(\frac{\ell}{a}\right)(\ell,a^{-1})\gamma(a^{-1},\psi)^{\ell-1}\left(\frac{1}{2},a^{-1}\right)^{2\ell-2}\\
	&=&|a|^{\frac{\ell-1}{2}+1}\psi\left(\frac{2\ell}{2a}\right)\left(\frac{2\ell}{2^{2\ell-1}},a^{-1}\right)\gamma(a^{-1},\psi)^{\ell-1}
	\end{eqnarray*}
	
	In summary, we get 
	\begin{equation*}\label{eq2}
	I(a,r)=|a|^{\frac{1}{2}\left[\frac{r-1}{2}\right]+1}\psi\left(\frac{r}{2a}\right)\left(\frac{r}{2^{r-1}},a^{-1}\right)\gamma\left(a^{-1},\psi\right)^{\left[\frac{r-1}{2}\right]}.
	\end{equation*}
	
	
	\end{proof}
Combining Equation (\ref{i1}), Equation (\ref{i2}), Propostions \ref{charac2} and \ref{prop1} we obtain the following proposition:
\begin{prop}\label{pro2}
	Suppose that the residual characteristic of $F$ is larger than $2r+1$. For $$\alpha=\D(a,a,\dots,a^{1-r}\det(w_M)\det(w_{\G_r})$$
	and $|a|$ is small enough,
	$$L_{w_M}^{w_{\G_r}}(\alpha)=|a|^{\left[\frac{r^2}{4}\right]-\frac{1}{2}\left[\frac{r}{2}\right]}\gamma(a^{-1},\psi)^{-\left[\frac{r}{2}\right]}K_{w_M}^{w_{\G_r}}(\alpha).$$
\end{prop}
Note that to simplify the formula, we used the following identities:
$$-\left[\frac{r^2}{4}\right]+\left[\frac{r^2+2r-3}{4}\right]-\left[\frac{r+1}{2}\right]+1=0$$
and $$(r-2)-\left[\frac{r^2+2r-3}{4}\right]+\frac{1}{2}\left[\frac{r-1}{2}\right]+1+\frac{(r-1)^2}{2}=\left[\frac{r^2}{4}\right]-\frac{1}{2}\left[\frac{r}{2}\right].$$ 
\section{Proof of the main theorem}
We shall prove the Theorem \ref{thm1} by induction on $r$. It is trivial when $r=1$. We suppose that it holds for $1\leq r'<r$.

Firstly we consider $W^R_r \ni w\neq \,w_{\G_r}$. The relevant element $w\mathbf{t}$ then has the following form
$$w\mathbf{t}=\left(\begin{matrix}
w_1\mathbf{t}_1&0\\
0&w_2\mathbf{t}_2
\end{matrix}\right)$$
with $w_i\mathbf{t}_i$ is the relevant element of $\G_{r_i}$. For a convenience, we shall introduce some intermediate orbital integrals.

On the side $J$, for a function $f\in\mathcal{C}^{\infty}_c(\G_{r_1+r_2}(F))$ 
we define the intermediate integral 
\begin{multline*}
J^{r_1}_{r_2}\left[\left(\begin{matrix}
g_1&\\
&g_2
\end{matrix}\right),f\right]\\
:=\int f\left[w_{\G_{r_1+r_2}}\left(\begin{matrix}
\mathrm{Id}_{r_1}&\\
{}^tX&\mathrm{Id}_{r_2}
\end{matrix}\right)\left(\begin{matrix}
w_{\G_{r_1}}g_1&\\
&w_{\G_{r_2}}g_2
\end{matrix}\right)\left(\begin{matrix}
\mathrm{Id}_{r_1}&Y\\
&\mathrm{Id}_{r_2}
\end{matrix}\right)\right]\times\\\times\theta\left[\left(\begin{matrix}
\mathrm{Id}_{r_1}&X+Y\\
&\mathrm{Id}_{r_2}
\end{matrix}\right)\right]dXdY\\
=\int f\left[w_{\G_{r_1+r_2}}\left(\begin{matrix}
A_{r_1}&A_{r_1}Y\\
{}^tXA_{r_1}&{}^tXA_{r_1}Y+B_{r_2}
\end{matrix}\right)\right]\theta\left[\left(\begin{matrix}
\mathrm{Id}_{r_1}&X+Y\\
&\mathrm{Id}_{r_2}
\end{matrix}\right)\right]dXdY,
\end{multline*}
where $A_{r_1}:=w_{\G_{r_1}}g_1\in \G_{r_1}(F)$, $B_{r_2}:=w_{\G_{r_2}}g_2\in \G_{r_2}(F)$ and the domain of integration is $M_{r_1\times r_2}(F)$- the set of matrices of size $r_1\times r_2$. 

Fixing $f\in\mathcal{C}^{\infty}_c(\G_{r_1+r_2}(F))$, the function $\mathfrak{J}^{r_1}_{r_2}(g_1,g_2):=J^{r_1}_{r_2}\left[\left(\begin{matrix}
g_1&\\
&g_2
\end{matrix}\right),f\right]$ is a smooth function of support compact on $\G_{r_1}(F)\times \G_{r_2}(F)$. 

Associated with the action of $(N_{r_1}\times N_{r_1})\times (N_{r_2}\times N_{r_2})$ on $\G_{r_1}\times \G_{r_2}$ we consider the double orbital integral 
\begin{multline*}
J(w_1\mathbf{t}_1,w_2\mathbf{t}_2;\mathfrak{J}^{r_1}_{r_2})\\:=\int\int \mathfrak{J}^{r_1}_{r_2}(n_1^{-1}w_{\G_{r_1}}w_1\mathbf{t}_1n'_1,n_2^{-1}w_{\G_{r_2}}w_2\mathbf{t}_2n'_2)\theta(n_1n'_1)
dn_1dn'_1\theta(n_2n'_2)dn_2dn'_2,
\end{multline*}
where the $(n_i,n'_i)$ is integrated over $N_{r_i}\times N_{r_i}$ divided by the stabilizer of $w_{\G_{r_i}}w_i\mathbf{t}_i$. 
We have then
\begin{equation}\label{eqn4}J\left(\left(\begin{matrix}
w_1\mathbf{t}_1&\\
&w_2\mathbf{t}_2
\end{matrix}\right),f\right)=J(w_1\mathbf{t}_1,w_2\mathbf{t}_2;\mathfrak{J}^{r_1}_{r_2})
\end{equation}

We can also define the partial orbital integrals $J_1(w_1\mathbf{t}_1,g_2;\mathfrak{J}^{r_1}_{r_2})$ and  $J_2(g_1,w_2\mathbf{t}_2;\mathfrak{J}^{r_1}_{r_2})$. For example 
$$
J_1(w_1\mathbf{t}_1,g_2;\mathfrak{J}^{r_1}_{r_2}):=\int \mathfrak{J}^{r_1}_{r_2}(n_1^{-1}w_{\G_{r_1}}w_1\mathbf{t}_1n'_1,g_2)\theta(n_1n'_1)
dn_1dn'_1,
$$
where the $(n_1,n'_1)$ is integrated over $N_{r_1}\times N_{r_1}$ divided by the stabilizer of $w_{\G_{r_1}}w_1\mathbf{t}_1$. If we fix $w_1\mathbf{t}_1$, this integral defines a smooth function of compact support on $\G_{r_2}(F)$. Moreover, we have: 
$$J(w_1\mathbf{t}_1,w_2\mathbf{t}_2;\mathfrak{J}^{r_1}_{r_2})=J(w_2\mathbf{t}_2,J_1(w_1\mathbf{t}_1,\bullet;\mathfrak{J}^{r_1}_{r_2}))$$
and $$J(w_1\mathbf{t}_1,w_2\mathbf{t}_2;\mathfrak{J}^{r_1}_{r_2})=J(w_1\mathbf{t}_1,J_2(\bullet,                                                                                                                                                                                                                                                                                                                                                                                                                                                                                                                                                                                                                                                                                                                                                                                                                                                                                                                                                                                                                                                                                                                                                                                                                                                                                                                                              w_2\mathbf{t}_2;\mathfrak{J}^{r_1}_{r_2})).$$

On the side $I$, for a function $\phi\in\mathcal{C}^{\infty}_c(\G_{r_1+r_2}(F))$ 
we define the intermediate integral 
\begin{multline*}
I^{r_1}_{r_2}\left[\left(\begin{matrix}
g_1&\\
&g_2
\end{matrix}\right),\phi\right]\\
:=\int \phi\left[\left(\begin{matrix}
\mathrm{Id}_{r_1}&\\
{}^tX&\mathrm{Id}_{r_2}
\end{matrix}\right)\left(\begin{matrix}
g_1&\\
&g_2
\end{matrix}\right)\left(\begin{matrix}
\mathrm{Id}_{r_1}&X\\
&\mathrm{Id}_{r_2}
\end{matrix}\right)\right]\theta\left[\left(\begin{matrix}
\mathrm{Id}_{r_1}&2X\\
&\mathrm{Id}_{r_2}
\end{matrix}\right)\right]dX\\
=\int \phi\left[\left(\begin{matrix}
g_1&g_1X\\
{}^tXg_1&{}^tXg_1X+g_2
\end{matrix}\right)\right]\theta\left[\left(\begin{matrix}
\mathrm{Id}_{r_1}&2X\\
&\mathrm{Id}_{r_2}
\end{matrix}\right)\right]dX,
\end{multline*}
where $g_i\in \G_{r_i}(F)$ and the domain of integration is $M_{r_1\times r_2}(F)$- the set of matrices of size $r_1\times r_2$. 

Fixing $\phi\in\mathcal{C}^{\infty}_c(\G_{r_1+r_2}(F))$, the function $\mathfrak{I}^{r_1}_{r_2}(g_1,g_2):=I^{r_1}_{r_2}\left[\left(\begin{matrix}
g_1&\\
&g_2
\end{matrix}\right),\phi\right]$ is a smooth function of support compact on $\G_{r_1}(F)\times \G_{r_2}(F)$. 

Associated with the action of $N_{r_1}\times N_{r_2}$ on $\G_{r_1}\times \G_{r_2}$ we consider the double orbital integral 
$$
I(w_1\mathbf{t}_1,w_2\mathbf{t}_2;\mathfrak{I}^{r_1}_{r_2}):=\int \mathfrak{I}^{r_1}_{r_2}({}^tn_1w_1\mathbf{t}_1n_1,{}^tn_2w_2\mathbf{t}_2n_2)\theta_{r_1}(n_1)
dn_1\theta_{r_2}(n_2)dn_2,
$$
where the $n_i$ is integrated over $N_{r_i}$ divided by the stabilizer of $w_i\mathbf{t}_i$. 
We have then
\begin{equation}\label{eqn5}I\left(\left(\begin{matrix}
w_1\mathbf{t}_1&\\
&w_2\mathbf{t}_2
\end{matrix}\right),\phi\right)=I(w_1\mathbf{t}_1,w_2\mathbf{t}_2;\mathfrak{I}^{r_1}_{r_2})
\end{equation}

We can also define the partial orbital integrals $I_1(w_1\mathbf{t}_1,g_2;\mathfrak{I}^{r_1}_{r_2})$ and  $I_2(g_1,w_2\mathbf{t}_2;\mathfrak{I}^{r_1}_{r_2})$. For example 
$$
I_1(w_1\mathbf{t}_1,g_2;\mathfrak{I}^{r_1}_{r_2}):=\int \mathfrak{I}^{r_1}_{r_2}({}^tn_1w_1\mathbf{t}_1n_1,g_2)\theta_{r_1}(n_1)
dn_1,
$$
where the $n_1$ is integrated over $N_{r_1}$ divided by the stabilizer of $w_1\mathbf{t}_1$. If we fix $w_1\mathbf{t}_1$, this integral defines a smooth function of compact support on $\G_{r_2}(F)$. Moreover, we have: 
$$I(w_1\mathbf{t}_1,w_2\mathbf{t}_2;\mathfrak{I}^{r_1}_{r_2})=I(w_2\mathbf{t}_2,I_1(w_1\mathbf{t}_1,\bullet;\mathfrak{I}^{r_1}_{r_2}))$$
and $$I(w_1\mathbf{t}_1,w_2\mathbf{t}_2;\mathfrak{I}^{r_1}_{r_2})=I(w_1\mathbf{t}_1,I_2(\bullet,                                                                                                                                                                                                                                                                                                                                                                                                                                                                                                                                                                                                                                                                                                                                                                                                                                                                                                                                                                                                                                                                                                                                                                                                                                                                                                                                              w_2\mathbf{t}_2;\mathfrak{I}^{r_1}_{r_2})).$$

Now we can continue with the induction argument. If $w_i=\mathrm{Id}_{r_i}$ then $w=\mathrm{Id}_r$. Suppose that $J(\D(\mathbf{t_1},\mathbf{t_2}),f)=\Delta_r(\D(\mathbf{t}_1,\mathbf{t}_2))I(\D(\mathbf{t_1},\mathbf{t_2}),\phi)$. Using the identities (\ref{eqn4}) and (\ref{eqn5}) we have: 
$$J(\mathbf{t}_1,\mathbf{t}_2;\mathfrak{J}^{r_1}_{r_2})=
\Delta_r(\D(\mathbf{t}_1,\mathbf{t}_2))I(\mathbf{t}_1,\mathbf{t}_2;\mathfrak{I}^{r_1}_{r_2})$$
Using the relation between the double integrals and the partial integral, it implies 
\begin{eqnarray}\label{eq6}J(\mathbf{t}_1,J_2(\bullet,\mathbf{t}_2;\mathfrak{J}^{r_1}_{r_2}))&=&
\Delta_r(\D(\mathbf{t}_1,\mathbf{t}_2))I(\mathbf{t}_1,I_2(\bullet,\mathbf{t}_2;\mathfrak{I}^{r_1}_{r_2}))
\end{eqnarray}
\begin{itemize}
	\item If $r\equiv r_1\mod 2$ (it is equivalent to $r_2\equiv 0\mod 2$), we have then: 
	$$\frac{\Delta_r(\D(\mathbf{t}_1,\mathbf{t}_2))}{\Delta_{r_1}(\mathbf{t}_1)}=\zeta(-1)^{\frac{r_2}{2}v(\det(\mathbf{t}_1))}|\det(\mathbf{t}_1)|^{-\frac{r_2}{2}}\Delta_{r_2}(\mathbf{t}_2).$$
	For fixed $\mathbf{t}_2\in T_{r_2}(F)$ we define 
	$$I'_2(g_1,\mathbf{\mathbf{t}_2};\mathfrak{I}^{r_1}_{r_2})=\zeta(-1)^{\frac{r_2}{2}v(\det(g_1))}|\det(g_1)|^{-\frac{r_2}{2}}\Delta_{r_2}(\mathbf{t}_2)I_2(g_1,\mathbf{\mathbf{t}_2};\mathfrak{I}^{r_1}_{r_2}).$$ 
	It is a smooth function of compact support on $\mathrm{GL}_{r_1}(F)$.
	
	Since the identity (\ref{eq6}) is true for all $\mathbf{t}_i\in T_{r_i}$, so when we fix $\mathbf{t}_2\in T_{r_2}(F)$, we obtain the matching relation over $\G_{r_1}(F)$:
	$$J(\mathbf{t}_1,J_2(\bullet,\mathbf{t}_2;\mathfrak{J}^{r_1}_{r_2}))=
	\Delta_{r_1}(\mathbf{t}_1)I(\mathbf{t}_1,I'_2(\bullet,\mathbf{t}_2;\mathfrak{I}^{r_1}_{r_2}))\quad\forall \,\mathbf{t_1}\in T_{r_1}(F).
	$$
	By induction, there exists $\Delta_{w_1}$ such that 
	\begin{align*}
	&J(w_1\mathbf{t}_1,J_2(\bullet,\mathbf{t}_2;\mathfrak{J}^{r_1}_{r_2}))=\Delta_{w_1}(\mathbf{t}_1)I(w_1\mathbf{t}_1,I'_2(\bullet,\mathbf{t}_2;\mathfrak{I}^{r_1}_{r_2}))\quad \forall \,\mathbf{t_1}\in T_{w_1}(F)\\
	&=\Delta_{w_1}(\mathbf{t}_1)\zeta(-1)^{\frac{r_2}{2}v(\det(w_1\mathbf{t}_1))}|\det(w_1\mathbf{t}_1)|^{-\frac{r_2}{2}}\Delta_{r_2}(\mathbf{t}_2)I(w_1\mathbf{t}_1,I_2(\bullet,\mathbf{t}_2;\mathfrak{I}^{r_1}_{r_2}))\\
	&=\Delta_{r_2}(\mathbf{t}_2)\zeta(-1)^{\frac{r_2}{2}v(\det(\mathbf{t}_1))}|\det(\mathbf{t}_1)|^{-\frac{r_2}{2}}\Delta_{w_1}(\mathbf{t}_1)I(w_1\mathbf{t}_1,I_2(\bullet,\mathbf{t}_2;\mathfrak{I}^{r_1}_{r_2})).	
	\end{align*}
	
	Reusing the relation between the double integrals and the partial integrals, we obtain the matching relation over $\G_{r_2}(F)$
	$$J(\mathbf{\mathbf{t}_2},J_1(w_1\mathbf{t}_1,\bullet;\mathfrak{J}^{r_1}_{r_2}))=\Delta_{r_2}(\mathbf{t}_2)I(\mathbf{t}_2,I'_1(w_1\mathbf{t}_1,\bullet;\mathfrak{I}^{r_1}_{r_2}))\quad \forall\, \mathbf{t_2}\in T_{r_2}(F),$$
	where $$I'_1(w_1\mathbf{t}_1,g_2;\mathfrak{I}^{r_1}_{r_2})=\zeta(-1)^{\frac{r_2}{2}v(\det(\mathbf{t}_1))}|\det(\mathbf{t}_1)|^{-\frac{r_2}{2}}\Delta_{w_1}(\mathbf{t}_1)I_1(w_1\mathbf{t}_1,g_2;\mathfrak{I}^{r_1}_{r_2}).$$ We should note that for fixed $w_1\mathbf{\mathbf{t}_1}$, the function $I'_1(w_1\mathbf{t}_1,g_2;\mathfrak{I}^{r_1}_{r_2})$ is a smooth function of compact support over $\G_{r_2}(F)$.
	
	By induction, there exists $\Delta_{w_2}$ such that  
	\begin{align*}
	&J(w_2\mathbf{\mathbf{t}_2},J_1(w_1\mathbf{t}_1,\bullet;\mathfrak{J}^{r_1}_{r_2}))=\Delta_{w_2}(\mathbf{t}_2)I(w_2\mathbf{t}_2,I'_1(w_1\mathbf{t}_1,\bullet;\mathfrak{I}^{r_1}_{r_2}))\quad\forall \,\mathbf{t_2}\in T_{w_2}(F)\\
	&=\zeta(-1)^{\frac{r_2}{2}v(\det(\mathbf{t}_1))}|\det(\mathbf{t}_1)|^{-\frac{r_2}{2}}\Delta_{w_1}(\mathbf{t}_1) \Delta_{w_2}(\mathbf{t}_2)I(w_2\mathbf{t_2},I_1(w_1\mathbf{t}_1,\bullet;\mathfrak{I}^{r_1}_{r_2})).
	\end{align*}
	
	We have then $\Delta_{w}(\mathbf{t})=\zeta(-1)^{\frac{r_2}{2}v(\det(\mathbf{t}_1))}|\det(\mathbf{t}_1)|^{-\frac{r_2}{2}}\Delta_{w_1}(\mathbf{t}_1) \Delta_{w_2}(\mathbf{t}_2).$
	\item If $r\not \equiv r_1\mod 2$ (it is equivalent to $r_2\equiv 1\mod 2$), we have then: 
	$$\frac{\Delta_r(\D(\mathbf{t}_1,\mathbf{t}_2))}{\Delta'_{r_1}(\mathbf{t}_1)}=\zeta(-1)^{\frac{r_2-1}{2}v(\det(\mathbf{t}_1))}|\det(\mathbf{t}_1)|^{-\frac{r_2}{2}}\Delta_{r_2}(\mathbf{t}_2).$$
	By doing the same (as the above case), we obtain that:
	$$\Delta_{w}(\mathbf{t})=\zeta(-1)^{\frac{r_2-1}{2}v(\det(\mathbf{t}_1))}|\det(\mathbf{t}_1)|^{-\frac{r_2}{2}}\Delta'_{w_1}(\mathbf{t}_1)\Delta_{w_2}(\mathbf{t}_2) .$$
	
\end{itemize}

We have proved the following proposition:
\begin{prop}\label{redu1}
	Let $w\mathbf{t}=\left(\begin{matrix}
	w_1\mathbf{t}_1&0\\
	0&w_2\mathbf{t}_2
	\end{matrix}\right)$ be a relevant element of $\G_r$. We have then
	$$\Delta_{w}(\mathbf{t})=\begin{cases}
\zeta(-1)^{\frac{r_2}{2}v(\det(\mathbf{t}_1))}|\det(\mathbf{t}_1)|^{-\frac{r_2}{2}}\Delta_{w_1}(\mathbf{t}_1) \Delta_{w_2}(\mathbf{t}_2)&\text{if } r_2\equiv 0\mod 2\\
\zeta(-1)^{\frac{r_2-1}{2}v(\det(\mathbf{t}_1))}|\det(\mathbf{t}_1)|^{-\frac{r_2}{2}}\Delta'_{w_1}(\mathbf{t}_1)\Delta_{w_2}(\mathbf{t}_2)&\text{if } r_2\not\equiv 0\mod 2.
	\end{cases}$$
\end{prop}

Similarly, we can also obtain the following proposition:

\begin{prop}\label{redu2}
	Let $w\mathbf{t}=\left(\begin{matrix}
	w_1\mathbf{t}_1&0\\
	0&w_2\mathbf{t}_2
	\end{matrix}\right)$ be a relevant element of $\G_r$. We have then
	$$\Delta'_{w}(\mathbf{t})=\begin{cases}
	\zeta(-1)^{\frac{r_2}{2}v(\det(\mathbf{t}_1))}|\det(\mathbf{t}_1)|^{-\frac{r_2}{2}}\Delta'_{w_1}(\mathbf{t}_1) \Delta'_{w_2}(\mathbf{t}_2)&\text{if } r_2\equiv 0\mod 2\\
	\zeta(-1)^{\frac{r_2+1}{2}v(\det(\mathbf{t}_1))}|\det(\mathbf{t}_1)|^{-\frac{r_2}{2}}\Delta_{w_1}(\mathbf{t}_1)\Delta'_{w_2}(\mathbf{t}_2)&\text{if } r_2\not\equiv 0\mod 2.
	\end{cases}$$
\end{prop}
\begin{coro}\label{mirabolic}
	Let $M$ be the standard parabolic subgroup type $(r-1,1)$ of $\G_r$. Let $\mathbf{t}=\D(a,\dots,a,b)\in T_{w_M}$ We have then:
	$$\Delta_{w_M}(\mathbf{t})=|a|^{-\frac{r-1}{2}}\Delta'_{w_{\G_{r-1}}}(\D(a,\dots,a))$$ and
	$$\Delta'_{w_M}(\mathbf{t})=\zeta(-1)^{v(\det(\mathbf{t}))}|a|^{-\frac{r-1}{2}}\gamma(b,\psi)(b,\varpi)^{v(b)}\Delta_{w_{\G_{r-1}}}(\D(a,\dots,a)).$$
\end{coro}



The rest of relevant elements are of the type $\Delta_{w_{\G_r}}\mathbf{t}$. For working with this type of relevant element, we shall use the germ relations which are mentioned in the section 2 and 3.  

Recall that we have:
$$J(w_M\mathbf{t},f)=\omega_f(\mathbf{t})+\sum_{\alpha\beta=\mathbf{t}}K^{w_{\G_r}}_{w_M}(\alpha)J(w_{\G_r}\beta,f)$$
and 
$$I(w_M\mathbf{t},\phi)=\omega_\phi(\mathbf{t})+\sum_{\alpha\beta=\mathbf{t}}L^{w_{\G_r}}_{w_M}(\alpha)I(w_{\G_r}\beta,\phi),$$
where $w_f,w_\phi$ are smooth function of compact support, $\mathbf{t}\in T_{w_M}$ and the sums are over all pairs in $\mathfrak{S}:=\{(\alpha,\beta)\in (T_{w_M}^{w_{\G_r}},T_{w_{\G_r}})|\alpha\beta=\mathbf{t}\}$. Given $(\alpha,\beta)\in (T_{w_M}^{w_{\G_r}},T_{w_{\G_r}})$, then all the pair $(\alpha',\beta')\in (T_{w_M}^{w_{\G_r}},T_{w_{\G_r}})$ satisfied $\alpha'\beta'=\alpha\beta$ have a form $(z^{-1}\alpha,z\beta)$ with $z$ is a $r$-th root of unity.

Given $\beta\in T_{w_{\G_r}}$, we choose $\alpha=\D(a,\dots,a,a^{1-r}\det(w_Mw_{\G_r}))$ with $|a|$ so small that $\omega_f(\alpha\beta) =\omega_\phi(\alpha\beta)=0$. We get then:
$$J(w_M\alpha\beta,f)=\sum_{z|z^r=1}K^{w_{\G_r}}_{w_M}(z^{-1}\alpha)J(w_{\G_r}z\beta,f)$$
and 
$$I(w_M\alpha\beta,\phi)=\sum_{z|z^r=1}L^{w_{\G_r}}_{w_M}(z^{-1}\alpha)I(w_{\G_r}z\beta,\phi).$$
Moreover, with $|a|$ small enough, using Proposition \ref{pro2}, we have: 
$$L^{w_{\G_r}}_{w_M}(z^{-1}\alpha)=c(a,z).K^{w_{\G_r}}_{w_M}(z^{-1}\alpha)$$
with $c(a,z):=|az^{-1}|^{\left[\frac{r^2}{4}\right]-\frac{1}{2}\left[\frac{r}{2}\right]}\gamma((az^{-1})^{-1},\psi)^{-\left[\frac{r}{2}\right]}$. Combining them with the matching relation of $\phi$ and $f$
 on the orbit of $w_M\alpha\beta$, we obtain
\begin{equation}\label{important}
\sum_{z|z^r=1}K^{w_{\G_r}}_{w_M}(z^{-1}\alpha)(J(w_{\G_r}z\beta,f)-c(a,z).\Delta^{\bullet}_{w_M}(\alpha\beta)I(w_{\G_r}z\beta,\phi))=0,
\end{equation}
where $\Delta^{\bullet}_{w_M}(\alpha\beta)$ is $\Delta_{w_M}(\alpha\beta)$ or $\Delta'_{w_M}(\alpha\beta)$ depend on the matching relation.

We hope that this condition implies 
$$J(w_{\G_r}z\beta,f)-c(a,z).\Delta^\bullet_{w_M}(\alpha\beta)I(w_{\G_r}z\beta,\phi)=0$$ for all $z$ satisfied $z^r=1$. In particularly, we have: 
$$J(w_{\G_r}\beta,f)=c(a,1).\Delta^\bullet_{w_M}(\alpha\beta)I(w_{\G_r}\beta,\phi).$$



For the fixed $\beta$, $f$ and $\phi$, the two integrals $I$ and $J$ don't depend on the choice of $a$, so we can require that $v(a)$ is even. With this addition condition, we define that:
\begin{equation}\label{smallest 2}
\Delta^\bullet_{w_{\G_r}}(\beta):=c(a,1).\Delta^\bullet_{w_M}(\alpha\beta).
\end{equation}

Now we shall prove that this definition is well defined (i.e the definition doesn't depend on $a$). More precisely, we prove the following proposition:
\begin{prop}
	Let $\beta\in F^\times$. Viewing $\beta$ as an element of $T_{w_{\G_r}}(F)$, we have then:
	$$\Delta_{w_{\G_r}}(\beta)=\zeta(-1)^{\left[\frac{r}{2}\right]\left[\frac{r+1}{2}\right]v(\beta)}|\beta|^{\frac{1}{2}\left[\frac{r}{2}\right]-\left[\frac{r^2}{4}\right]}\gamma(\beta,\psi)^{\left[\frac{r}{2}\right]}(\beta,\varpi)^{\left[\frac{r}{2}\right]v(\beta)}$$ and 
	$$\Delta'_{w_{\G_r}}(\beta)=\zeta(-1)^{\left[\frac{r+1}{2}\right]\left[\frac{r+2}{2}\right]v(\beta)}|\beta|^{\frac{1}{2}\left[\frac{r}{2}\right]-\left[\frac{r^2}{4}\right]}\gamma(\beta,\psi)^{\left[\frac{r+1}{2}\right]}(\beta,\varpi)^{\left[\frac{r+1}{2}\right]{v(\beta)}}.$$
\end{prop} 
\begin{proof}We shall prove this proposition by induction on $r$.
	\begin{itemize}
		\item For $r=1$, it is trivial. In this case, we have $$\Delta_{w_{\G_1}}(\beta)=\Delta_1(\beta)=1$$ and $$\Delta'_{w_{\G_1}}(\beta)=\Delta'_1(\beta)=\zeta(-1)^{v(\beta)}\gamma(\beta,\psi)(\beta,\varpi)^{v(\beta)}.$$
		\item Assume that this proposition holds for $r-1$. Combining the equation (\ref{smallest 2}) and the formula in Corollary \ref{mirabolic} we have:
		\begin{eqnarray}\label{important3}
		\Delta_{w_{\G_r}}(\beta)&=&|a|^{\left[\frac{r^2}{4}\right]-\frac{1}{2}\left[\frac{r}{2}\right]}\gamma(a^{-1},\psi)^{-\left[\frac{r}{2}\right]}\Delta_M(\alpha\beta)\nonumber\\
		&=&|a|^{\left[\frac{r^2}{4}\right]-\frac{1}{2}\left[\frac{r}{2}\right]}\Delta_M(\alpha\beta)\quad\left(\text{since $v(a)$ is even then $\gamma(a^{-1},\psi)=1$}\right)\nonumber\\
		&=&|a|^{\left[\frac{r^2}{4}\right]-\frac{1}{2}\left[\frac{r}{2}\right]}|a\beta|^{-\frac{r-1}{2}}\Delta'_{w_{\G_{r-1}}}(\D(a\beta,\dots,a\beta))\nonumber\\
		&=&|a|^{\left[\frac{r^2}{4}\right]-\frac{1}{2}\left[\frac{r}{2}\right]}|a\beta|^{-\frac{r-1}{2}}\nonumber\\
		&&\times\zeta(-1)^{\left[\frac{r}{2}\right]\left[\frac{r+1}{2}\right]v(a\beta)}|a\beta|^{\frac{1}{2}\left[\frac{r-1}{2}\right]-\left[\frac{(r-1)^2}{4}\right]}\gamma(a\beta,\psi)^{\left[\frac{r}{2}\right]}(a\beta,\varpi)^{\left[\frac{r}{2}\right]v(a\beta)}\nonumber\\
		&=&\zeta(-1)^{\left[\frac{r}{2}\right]\left[\frac{r+1}{2}\right]v(\beta)}|\beta|^{\frac{1}{2}\left[\frac{r}{2}\right]-\left[\frac{r^2}{4}\right]}\gamma(a\beta,\psi)^{\left[\frac{r}{2}\right]}(a\beta,\varpi)^{\left[\frac{r}{2}\right]{v(\beta)}}.
		\end{eqnarray}
		The last equation is obtained by using $v(a)\equiv 0\mod 2$ and the identity:
		$$\left[\frac{r^2}{4}\right]-\frac{1}{2}\left[\frac{r}{2}\right]=\left[\frac{(r-1)^2}{4}\right]-\frac{1}{2}\left[\frac{r-1}{2}\right]+\frac{r-1}{2}.$$ 
		
	Moreover, we have:
	\begin{eqnarray}
	\gamma(a\beta,\psi)(a\beta,\varpi)^{v(\beta)}&=&\gamma(a,\psi)\gamma(\beta,\psi)(a,\beta)(a\beta,\varpi)^{v(\beta)}\nonumber\\
	&=&\gamma(\beta,\psi)(a,\beta)(a\beta,\varpi)^{v(\beta)}\,(\text{since $a$ is even then $\gamma(a,\psi)=1$})\nonumber\\
	&=&\gamma(\beta,\psi)(a,\varpi)^{v(\beta)}(a\beta,\varpi)^{v(\beta)}\label{important2}\\
	&=&\gamma(\beta,\psi)(\beta,\varpi)^{v(\beta)}.\label{important4}
	\end{eqnarray}	Since $v(a)$ is even, we have $(a,\beta)=\begin{cases}
	1&\text{if $v(\beta)$ is even} \\
	(a,\varpi)&\text{if $v(\beta)$ is odd}
	\end{cases}=(a,\varpi)^{v(\beta)}$. It implies the relation (\ref{important2}).
	
	Combining the relations (\ref{important3}) and (\ref{important4}), we get:
	$$\Delta_{w_{\G_r}}(\beta)=\zeta(-1)^{\left[\frac{r}{2}\right]\left[\frac{r+1}{2}\right]v(\beta)}|\beta|^{\frac{1}{2}\left[\frac{r}{2}\right]-\left[\frac{r^2}{4}\right]}\gamma(\beta,\psi)^{\left[\frac{r}{2}\right]}(\beta,\varpi)^{\left[\frac{r}{2}\right]v(\beta)}$$
	
		Similarly, we have: 
	\begin{eqnarray*}
		\Delta'_{w_{\G_r}}(\beta)&=&|a|^{\left[\frac{r^2}{4}\right]-\frac{1}{2}\left[\frac{r}{2}\right]}\gamma(a^{-1},\psi)^{-\left[\frac{r}{2}\right]}\Delta'_M(\alpha\beta)\\
		&=&|a|^{\left[\frac{r^2}{4}\right]-\frac{1}{2}\left[\frac{r}{2}\right]}\Delta'_M(\alpha\beta)\quad\left(\text{since $a$ is square then $\gamma(a^{-1},\psi)=1$}\right)\\
		&=&|a|^{\left[\frac{r^2}{4}\right]-\frac{1}{2}\left[\frac{r}{2}\right]}\zeta(-1)^{v((-1)^{r-1}\beta^r)}|a\beta|^{-\frac{r-1}{2}}\gamma((-a)^{1-r}\beta,\psi)\\
		&&\times((-a)^{1-r}\beta,\varpi)^{v((-a)^{1-r}\beta)}\Delta_{w_{\G_{r-1}}}(a\beta)\\
		&=&|a|^{\left[\frac{r^2}{4}\right]-\frac{1}{2}\left[\frac{r}{2}\right]}\zeta(-1)^{v(\beta^r)}|a\beta|^{-\frac{r-1}{2}}\gamma((-a)^{1-r}\beta,\psi)((-a)^{1-r}\beta,\varpi)^{v(\beta)}\\
		&&\times\zeta(-1)^{\left[\frac{r-1}{2}\right]\left[\frac{r}{2}\right]v(a\beta)}|a\beta|^{\frac{1}{2}\left[\frac{r-1}{2}\right]-\left[\frac{(r-1)^2}{4}\right]}\gamma(a\beta,\psi)^{\left[\frac{r-1}{2}\right]}(a\beta,\psi)^{\left[\frac{r-1}{2}\right]v(a\beta)}\\
		&=&\zeta(-1)^{\left[\frac{r+1}{2}\right]\left[\frac{r+2}{2}\right]v(\beta)}|\beta|^{\frac{1}{2}\left[\frac{r}{2}\right]-\left[\frac{r^2}{4}\right]}\gamma(\beta,\psi)^{\left[\frac{r+1}{2}\right]}(\beta,\varpi)^{\left[\frac{r+1}{2}\right]}.
	\end{eqnarray*}
	The last equation is obtained by using the relation (\ref{important4}) (note that it requires only $v(a)\equiv 0\mod 2$) and the identity:
	$$\left[\frac{r-1}{2}\right]\left[\frac{r}{2}\right]+r=\left[\frac{r+1}{2}\right]\left[\frac{r+2}{2}\right].$$ 
	
	\end{itemize}
\end{proof}
\begin{lemm}\label{technique}Suppose that $v(a)$ is even. We have 
	$$\Delta_{w_{\G_r}}(z\beta)=c(a,z)\Delta_{w_M}(\alpha\beta)$$
	and 
	$$\Delta'_{w_{\G_r}}(z^{-1}\beta)=c(a,z)\Delta'_{w_M}(\alpha\beta)$$ for all $z\in F^*$ satisfied $z^r=1$ and $\beta\in F^*$.
\end{lemm}
\begin{proof}
	We prove only the first relation (the second one can be done similarly). We should mention that we use $v(z)=0$ in some steps of the following calculation.
	\begin{eqnarray*}
		c(a,z)\Delta_{w_M}(\alpha\beta)&=&|az^{-1}|^{\left[\frac{r^2}{4}\right]-\frac{1}{2}\left[\frac{r}{2}\right]}\gamma((az^{-1})^{-1},\psi)^{-\left[\frac{r}{2}\right]}\Delta_M(\alpha\beta)\nonumber\\
		&=&|az^{-1}|^{\left[\frac{r^2}{4}\right]-\frac{1}{2}\left[\frac{r}{2}\right]}\Delta_M(\alpha\beta)\quad\left(\text{since $v(az^{-1})$ is even}\right)\nonumber\\ 
		&=&|az^{-1}|^{\left[\frac{r^2}{4}\right]-\frac{1}{2}\left[\frac{r}{2}\right]}|a\beta|^{-\frac{r-1}{2}}\Delta'_{w_{\G_{r-1}}}(\D(a\beta,\dots,a\beta))\nonumber\\
		&=&|az^{-1}|^{\left[\frac{r^2}{4}\right]-\frac{1}{2}\left[\frac{r}{2}\right]}|a\beta|^{-\frac{r-1}{2}}\times\nonumber\\
		&&\times\zeta(-1)^{\left[\frac{r}{2}\right]\left[\frac{r+1}{2}\right]v(a\beta)}|a\beta|^{\frac{1}{2}\left[\frac{r-1}{2}\right]-\left[\frac{(r-1)^2}{4}\right]}\gamma(a\beta,\psi)^{\left[\frac{r}{2}\right]}(a\beta,\psi)^{\left[\frac{r}{2}\right]v(a\beta)}\nonumber\\
		&=&\zeta(-1)^{\left[\frac{r}{2}\right]\left[\frac{r+1}{2}\right]v(\beta)}|z\beta|^{\frac{1}{2}\left[\frac{r}{2}\right]-\left[\frac{r^2}{4}\right]}\gamma(\beta,\psi)^{\left[\frac{r}{2}\right]}(\beta,\varpi)^{v(\beta)}\nonumber\\
		&=&\zeta(-1)^{\left[\frac{r}{2}\right]\left[\frac{r+1}{2}\right]v(z\beta)}|z\beta|^{\frac{1}{2}\left[\frac{r}{2}\right]-\left[\frac{r^2}{4}\right]}\gamma(z\beta,\psi)^{\left[\frac{r}{2}\right]}(z\beta,\varpi)^{v(\beta)} \\
		&=&\Delta_{w_{\G_r}}(z\beta).
	\end{eqnarray*}	

\end{proof}
The relation (\ref{important}) reads
$$\sum_{z|z^r=1}K^{w_{\G_r}}_{w_M}(z^{-1}\alpha)(J(w_{\G_r}z\beta,f)-\Delta^\bullet_{w_{\G_r}}(z\beta)I(w_{\G_r}z\beta,\phi))=0,
$$ 
for $|a|$ small enough and $v(a)\equiv 0\mod 2$. If we set 
$$m^\bullet(z)=J(w_{\G_r}z\beta,f)-\Delta^\bullet_{w_{\G_r}}(z\beta)I(w_{\G_r}z\beta,\phi)$$ we see that the above relation reads:
$$\sum_{z|z^r=1}\psi(\frac{rz}{2a})m^\bullet(z)=0,$$
for $|a|$ small enough and $v(a)\equiv 0\mod 2$. We have to see that $m^\bullet(z)=0$ for all $z$. Thus it will follows from the following lemma.
\begin{lemm} Given $x_i$ are distinct point in $F$. Suppose that for each index $i$, there is a constant $m_i$ such that
	$$\sum_{i}m_i\psi(x_ix)=0$$
for all even $x$ (i.e $v(x)\equiv 0 \mod 2$) with $|x|$ large enough. Then $m_i=0$ for all $i$. 
\end{lemm}
\begin{proof}
	Suppose that $m_{i_0}\neq 0$. At the cost of multiplying by $\psi(-x_{i_0}x)$ we may assume that our relation takes the form:
	$$1=\sum m_i\psi(x_ix),$$
	where $x_i\neq 0$.
	We choose an $b$ with $b$ large and integrate this identity over the set $|x|=|b^2|$ against the multiplicative Haar measure. The left hand side gives a positive value. On other hand, for fixed $x_i\neq 0$ and $|b|$ large, the integral 
	$$\int_{|x|=|b^2|}\psi(x_ix)d^\times x$$ vanishes. Thus the terms of right hand side contribute zero. We get then a contradiction.
\end{proof}

For $x\in\{0,1\}$, we denote by 
$$\Delta_{x,r}(\beta)=\zeta(-1)^{\left[\frac{r+x}{2}\right]\left[\frac{r+x+1}{2}\right]v(\beta)}|\beta|^{\frac{1}{2}\left[\frac{r}{2}\right]-\left[\frac{r^2}{4}\right]}\gamma(\beta,\psi)^{\left[\frac{r+x}{2}\right]}(\beta,\varpi)^{\left[\frac{r+x}{2}\right]}.$$ We have then 
$\Delta_{0,r}(\beta)=\Delta_{w_{\G_r}}(\beta)$ and $\Delta_{1,r}(\beta)=\Delta'_{w_{\G_r}}(\beta)$.

Let $M$ be the standard Levi-subgroup of $\G_r$ of type $(r_1,\dots,r_m)$. Let $\mathbf{t}=\D(t_i\mathrm{Id}_{r_i},1\leq i\leq m)\in T_M$. For $x\in\{0,1\}$, we define a sequence $(y^m_{x,i})_{1\leq i\leq m}\in\{0,1\}^m$ as follows:
\begin{itemize}
	\item $y^m_{x,m}=x$,
	\item $y^m_{x,i}=(y_{x,{i+1}}+r_i)\mod 2\quad\forall 1\leq i\leq m-1$.
\end{itemize} 
We denote by $\Delta_{x,M}$ the function over $T_M(F)$:
$$\Delta_{x,M}(\mathbf{t}):=\left[\zeta(-1)^{\sum_{i=1}^{m-1}r_i\left(\sum_{j=i+1}^{m}\left[\frac{r_j+y^m_{x,j}}{2}\right]\right)v(t_i)}\right]\left[\prod_{i=1}^{m-1}|t_i|^{\frac{-r_i}{2}(\sum_{j=i+1}^{m}r_j)}\right]\prod_{i=1}^{m}\Delta_{y^m_{x,i},r_i}(t_i).$$ 
\begin{theo}\label{main2}
	Let $M$ be the standard Levi-subgroup of $\G_r$ of type $(r_1,\dots,r_m)$. Let $\mathbf{t}=\D(t_i\mathrm{Id}_{r_i},1\leq i\leq m)\in T_M$. We have then:
	$$\Delta_{w_M}(\mathbf{t})=\Delta_{0,M}(\mathbf{t})\quad\text{and}\quad \Delta'_{w_M}(\mathbf{t})=\Delta_{1,M}(\mathbf{t}).$$
\end{theo}
\begin{proof}
	We prove this theorem by induction over $m$.
	\begin{itemize}
		\item If $m=1$, then $M=\G_r$, and $\mathbf{t}=\D(\beta,\dots,\beta)$. We have:
		$$\Delta_{x,\G_r}(\mathbf{t})=\Delta_{x,r}(\beta).$$
		\item Suppose that the theorem holds for $m-1$. We denote by $M_1$ the standard Levi-subgroup of $\G_{r_1+\dots+r_{m-1}}$ of type $(r_1,r_2,\dots,r_{m-1})$. Using Proposition \ref{redu1} for $w=w_M$, $w_1=w_{M_1}$, $w_2=w_{\G_{r_m}}$; $\mathbf{t}_1=\D(t_i\mathrm{Id}_{r_i},1\leq i\leq m-1)$ and $\mathbf{t_2}=t_{m}\mathrm{Id}_{r_m}$, we have
		\begin{eqnarray*}
		\Delta_{w_M}(\mathbf{t})&=&\begin{cases}
			\zeta(-1)^{\frac{r_m}{2}v(\det(\mathbf{t}_1))}|\det(\mathbf{t}_1)|^{-\frac{r_m}{2}}\Delta_{w_1}(\mathbf{t}_1) \Delta_{w_2}(\mathbf{t}_2)&\text{if } r_m\equiv 0\,\mathrm{mod}\, 2\\
			\zeta(-1)^{\frac{r_m-1}{2}v(\det(\mathbf{t}_1))}|\det(\mathbf{t}_1)|^{-\frac{r_m}{2}}\Delta'_{w_1}(\mathbf{t}_1)\Delta_{w_2}(\mathbf{t}_2)&\text{if } r_m\not\equiv 0\,\mathrm{mod}\, 2
		\end{cases}\\
	&=&\zeta(-1)^{\left[\frac{r_m}{2}\right]v(\det(\mathbf{t}_1))}|\det(\mathbf{t}_1)|^{-\frac{r_m}{2}}\Delta_{r_m\, \mathrm{mod}\, 2,M_1}(\mathbf{t_1})\Delta_{0,r_m}(t_m).
		\end{eqnarray*}
	Note that $y^{m-1}_{r_m\, \mathrm{mod}\, 2,i}=y^m_{0,i}$, so the last equation can be rewritten:
	\begin{eqnarray*}
	\Delta_{w_M}(\mathbf{t})&=&\zeta(-1)^{\left[\frac{r_m}{2}\right]v(\det(\mathbf{t}_1))}|\det(\mathbf{t}_1)|^{-\frac{r_m}{2}}\Delta_{r_m\, \mathrm{mod}\, 2,M_1}(\mathbf{t_1})\Delta_{y^m_{0,m},r_m}(t_m)\\
	&=&\zeta(-1)^{\left[\frac{r_m}{2}\right]v(\det(\mathbf{t}_1))}|\det(\mathbf{t}_1)|^{-\frac{r_m}{2}}\Delta_{y^m_{0,m},r_m}(t_m)\\
	&&\times \left[\zeta(-1)^{\sum_{i=1}^{m-2}r_i\left(\sum_{j=i+1}^{m-1}\left[\frac{r_j+y^{m-1}_{r_m\, \mathrm{mod}\, 2,j}}{2}\right]\right)v(t_i)}\right]\\
	&&\times\left[\prod_{i=1}^{m-2}|t_i|^{\frac{-r_i}{2}(\sum_{j=i+1}^{m-1}r_j)}\right]\prod_{i=1}^{m-1}\Delta_{y^{m-1}_{r_m\, \mathrm{mod}\, 2,i},r_i}(t_i)\\
	&=&\zeta(-1)^{\left[\frac{r_m+y^m_{0,m}}{2}\right]\left(\sum_{i=1}^{m-1}(r_iv(t_i))\right)}|\prod_{i=1}^{m-1}t_i^{r_i}|^{-\frac{r_m}{2}}\Delta_{y^m_{0,m},r_m}(t_m)\\
	&&\times \left[\zeta(-1)^{\sum_{i=1}^{m-2}r_i\left(\sum_{j=i+1}^{m-1}\left[\frac{r_j+y^{m}_{0,j}}{2}\right]\right)v(t_i)}\right]\\
	&&\times\left[\prod_{i=1}^{m-2}|t_i|^{\frac{-r_i}{2}(\sum_{j=i+1}^{m-1}r_j)}\right]\prod_{i=1}^{m-1}\Delta_{y^{m}_{0,i},r_i}(t_i)\\
	&=&\Delta_{0,M}(\mathbf{t}).	
	\end{eqnarray*}
We do similarly for $\Delta'_{w_M}(\mathbf{t})$.
	\end{itemize}
\end{proof}
\backmatter


\begin{thebibliography}{9}
	\bibitem{VC} V. C. Do, {\it Le lemme fondamental m\'etaplectique de Jacquet et Mao en caract\'eristique positive}, C. R. Acad. Sci. Paris 349 (2011), 1077-1081.
	\bibitem{VC1}V. C. Do, {\it Le lemme fondamental m\'etaplectique de Jacquet et Mao en \'egales caract\'eristiques}, Bull. Soc. math. France, 143 (1), 2015, 125-196.
	\bibitem{VC2}V. C. Do, {\it Transfer to characteristic zero of Jacquet-Mao's metaplectic fundamental lemma}, IMRN, rny218, https://doi.org/10.1093/imrn/rny218.
	\bibitem{J}H. Jacquet, {\it On the non vanishing of some $L$-functions}, Proc. Indian Acad. Sci. (Math. Set.) 97 (1987), 117-155.
	\bibitem{J1}H. Jacquet, {\it Repr\'esentations distingu\'ees pour le groupe orthogonal}, C. R. Acad. Sci. Paris S\'er. I Math. No. 13 312 (1991), 957-961.
	\bibitem{J3}H. Jacquet, Y. Ye, {\it Distinguished representations and quadratic base change for $\G(3)$}, Proceedings of the American mathematical society, Vol. 348, No. 3 (1996), 913-939. 
	\bibitem{J2}H. Jacquet {\it A theorem of density for Kloosterman integrals. Mikio Sato: a great Japanese mathematician of the twentieth century} Asian J. Math. 2 (1998), no. 4, 759-778.
	\bibitem{KP}D. Kazhdan, S. Patterson, {\it Metaplectic forms}, Inst. Hautes \'Etudes Sci. Publ. Math. No. 59 (1984), 35-142.
	\bibitem{M}Z. Mao, {\it A fundamental lemma for the metaplectic correspondence}, J. Reine Angrew. Math. 496 (1998),107-129.
\end{thebibliography}
\end{document}